%===========================================================
%---  HP of  Kähler differential modules                 ---
%===========================================================

\documentclass[envcountsame,envcountsect]{svjour3}
\usepackage{fix-cm}
\usepackage{amsmath}
\usepackage{amssymb}

\smartqed
\hfuzz 3pt

%%%%%%%%%%%%%%%%%%%  theorem environments %%%%%%%%%%%%%%%%%%%

\spnewtheorem{remark}[theorem]{Remark}{\bfseries}{\rm}
\spnewtheorem{example}[theorem]{Example}{\bfseries}{\rm}

%%%%%%%%%%%%%%%%%%%   operator names %%%%%%%%%%%%%%%%%%%%%%%%
\def\im{\operatorname{Im}}     \def\Ker{\operatorname{Ker}}
   \def\ri{\operatorname{ri}}
\def\Supp{\operatorname{Supp}} \def\Ann{\operatorname{Ann}}
\def\HF{\operatorname{HF}}     \def\HP{\operatorname{HP}}
\newcommand{\bbQ}{\ensuremath{\mathbb Q}}
\newcommand{\bbZ}{\ensuremath{\mathbb Z}}
\newcommand{\bbN}{\ensuremath{\mathbb N}}
\newcommand{\bbP}{\ensuremath{\mathbb P}}
\newcommand{\bbX}{\ensuremath{\mathbb X}}
\newcommand{\bbY}{\ensuremath{\mathbb Y}}
\newcommand{\bbW}{\ensuremath{\mathbb W}}
\newcommand{\bbV}{\ensuremath{\mathbb V}}

\def\tsum#1#2{{\textstyle\sum\limits_{#1}^{#2}}}
\def\tfrac#1#2{{\textstyle\frac{#1}{#2}}}

%%%%%%%%%%%%%%%%%%%%%% Document  %%%%%%%%%%%%%%%%%%%%%%%%%%%

\begin{document}

\title{Hilbert Polynomials of K{\"a}hler Differential Modules 
for Fat Point Schemes}

\author{Martin Kreuzer \and Tran N.K. Linh \and Le~Ngoc~Long}

\institute{
Martin Kreuzer \at Fakult\"{a}t f\"{u}r Informatik und Mathematik,
Universit\"{a}t Passau, D-94030 Passau, Germany,\\
\email{martin.kreuzer@uni-passau.de}
\and
Tran N.K. Linh \at Department of Mathematics,
University of Education, Hue University, 34 Le Loi, Hue, Vietnam,
\email{tnkhanhlinh@hueuni.edu.vn}
\and
Le Ngoc Long \at Fakult\"{a}t f\"{u}r Informatik und Mathematik 
Universit\"{a}t Passau, D-94030 Passau, Germany 
and Department of Mathematics, University of Education, Hue University,
34 Le Loi, Hue, Vietnam,
\email{lelong@hueuni.edu.vn}
}

\maketitle

\subclass{13D40, 13N05, 14C99}

\keywords{Fat point scheme, K\"ahler differential module,
Hilbert function, regularity index}

\medskip

%%%%%% Abstract %%%%%%%%%%%%%%%%%%%%%%%%%%%%

\begin{abstract} Given a fat point scheme $\mathbb{W}=m_1P_1+\cdots+m_sP_s$ 
in the projective $n$-space~$\mathbb{P}^n$ over a field~$K$ of characteristic zero,
the modules of K\"ahler differential $k$-forms of its homogeneous
coordinate ring contain useful information about algebraic and
geometric properties of~$\mathbb{W}$ when $k\in\{1,\dots, n+1\}$. 
In this paper we determine the
value of its Hilbert polynomial explicitly for the case $k=n+1$, 
confirming an earlier conjecture. More precisely this value is given by the multiplicity
of the fat point scheme $\bbY = (m_1-1)P_1 + \cdots + (m_s-1)P_s$.
For $n=2$, this allows us to determine the Hilbert polynomials of the
modules of K\"ahler differential $k$-forms for $k=1,2,3$, and
to produce a sharp bound for the regularity index for $k=2$.
\end{abstract}

\bigbreak
\section{Introduction}

Let $\bbX = \{P_1,\dots,P_s\}$ be a set of points in projective
$n$-space~$\bbP^n$ over a field~$K$ of characteristic zero, and let $I_\bbX$ 
be the homogeneous vanishing ideal of~$\bbX$ in $S=K[X_0, \dots, X_n]$.
One important reason why the Hilbert function of~$I_\bbX$ has been studied
extensively is that the elements of $(I_\bbX)_d$ control the non-uniqueness
of the solution of the homogeneous polynomial interpolation problem in degree~$d$.
When we switch to the Hermite interpolation problem, this non-uniqueness
is controlled by the set of all polynomials having the property that not only
their values, but also their derivatives up to some order~$m_i$ vanish at the
point~$P_i$ for $i=1,\dots,s$. In the language of algebraic geometry, this means
that we are interested in the Hilbert function of the homogeneous vanishing
ideal $I_\bbW = I_{P_1}^{m_1} \cap \cdots \cap I_{P_s}^{m_s}$
of the {\it fat point scheme} $\bbW = m_1 P_1 + \cdots + m_s P_s$ in~$\bbP^n$.
Such schemes have arisen in several other contexts, for instance
in the study of projective varieties which are obtained from blowing up
sets of points in~$\bbP^n$ (see~\cite{CH2013}).

The Hilbert functions of the ideals~$I_\bbW$, or equivalently, of the homogeneous
coordinate rings $R_\bbW = S/I_\bbW$ have undergone intense scrutiny in the past.
A major breakthrough was the paper~\cite{AH1995} in which Alexander and Hirshowitz
determined the Hilbert function of fat point schemes consisting of double
points (i.e., with $m_1 = \cdots = m_s =2$) for the case of a generic support~$\bbX$.
For this, they used a local differential method which they called 
{\it La methode d{\!\'{}}Horace}. It is therefore a natural approach to study also global
differentials for fat point schemes. They are given by the module of K\"ahler
differentials $\Omega^1_{R_\bbW/K}$ and its exterior powers, the modules
of K\"ahler differential $k$-forms $\Omega^k_{R_\bbW/K}$ for $k\ge 1$.
In~\cite{DK1999}, De Dominicis and the first author initiated a careful 
examination of the structure and the Hilbert function of~$\Omega^1_{R_\bbX/K}$
for a set of points~$\bbX$ (i.e., for the case $m_1 = \cdots = m_s=1$).
Then, in~\cite{KLL2015}, the present authors started the study of the modules
of K\"ahler differential $k$-forms for fat point schemes, and in~\cite{KLL2018}
this work was continued. 

A major open question in~\cite{KLL2018} is a formula for the Hilbert polynomial
of $\Omega^{n+1}_{R_\bbW/K}$, i.e., for the value of the Hilbert function in large
degrees, which is given as a conjecture there. Our main result here is to prove this
formula. More precisely, in Theorem~\ref{ThmS2_7} we show that, for a
fat point scheme $\bbW=m_1 P_1 + \cdots + m_sP_s$, the Hilbert polynomial
of~$\Omega^{n+1}_{R_\bbW/K}$ is equal to the multiplicity of the {\it slimming}
$\bbY = (m_1-1)P_1 + \cdots + (m_s-1)P_s$ of~$\bbX$, and is therefore given by  
$\sum_{i=1}^s \binom{m_i+n-2}{n}$.

To achieve this goal, we proceed as follows. After recalling the definition and some
basic results for fat point schemes in Section~2, we compare the Hilbert functions
of a fat point scheme $\bbW=m_1 P_1 + \cdots +m_s P_s$, its support
$\bbX = P_1 + \cdots P_s$, and its slimming $\bbY = (m_1-1) P_1 + \cdots + 
(m_s-1) P_s$. More precisely, we show in Proposition~\ref{PropS1_6} that the Hilbert 
functions of~$I_\bbW$ and of $I_\bbX \cdot I_\bbY$ agree in sufficiently large degrees.
In the last part of this section we recall the definition of the module of
K\"ahler differential $k$-forms of~$R_\bbW/K$ and recall some of its basic properties.

Section 3 contains the main results of this paper. First we show that,
for a set of points~$\bbX$ in~$\bbP^n$ and a subset~$\bbY$ of~$\bbX$,
the vanishing ideals satisfy $(I_\bbX^k \cdot I_\bbY^\ell) _i \subseteq
(\partial(I_\bbX^{k+1}\cdot I_\bbY^\ell))_i$ for all $k,\ell\ge 0$ 
in sufficiently large degrees $i\gg 0$ (see Lemma~\ref{LemS2_4}). 
Then we use a subtle induction argument to prove
that, for a sequence of point sets $\bbY_1 \supseteq \bbY_2 \supseteq \cdots
\supseteq \bbY_t$ in~$\bbP^n$ and for $\nu_1,\dots,\nu_t>0$, we have
$(I_{\bbY_1}^{\nu_1} \cdots I_{\bbY_t}^{\nu_t})_i \subseteq
(\partial (I_{\bbY_1}^{\nu_1+1}\, I_{\bbY_2}^{\nu_2} \cdots I_{\bbY_t}^{\nu_t}))_i$
in sufficiently large degrees $i\gg 0$ (see Proposition~\ref{PropS2_5}). 
Using the presentation $\Omega^{n+1}_{R_\bbW/K} \cong (S/\partial I_\bbW)(-n-1)$ 
and starting from the equimul\-tiple case $m_1=\cdots = m_s$, we then prove
inductively Theorem~\ref{ThmS2_7} which says that the Hilbert function
of the module of K\"ahler $(n+1)$-forms of a fat point scheme 
$\bbW=m_1 P_1 + \cdots + m_s P_s$ is given in large degrees
by the Hilbert function of its slimming~$\bbY$. On other words, we have
$\HP(\Omega^{n+1}_{R_\bbW/K}) = \sum_{i=1}^s \binom{m_i+n-2}{n}$.

The final section is devoted to applying this theorem in the case of
fat point schemes in the plane~$\bbP^2$. In this case we determine the Hilbert
polynomials of $\Omega^k_{R_\bbW/K}$ for all three relevant cases $k=1,2,3$
(see Proposition~\ref{PropS3_1}) and provide a bound from where on the
canonical exact sequence
$$
\begin{aligned}
0\rightarrow (I_{\bbW^{(1)}}/I_{\bbW^{(2)}})_i
&\stackrel{\alpha}{\longrightarrow}
(I_\bbW\Omega^1_{S/K}/I_{\bbW^{(1)}}\Omega^1_{S/K})_i \\
&\stackrel{\beta}{\longrightarrow}
(\Omega^2_{S/K}/ I_\bbW\Omega^2_{S/K})_i
\stackrel{\gamma}{\longrightarrow}
(\Omega^2_{R_\bbW/K})_i\rightarrow 0.
\end{aligned}
$$
is exact in degree~$i$ (see Proposition~\ref{PropS3_3}). Here $\bbW^{(1)}$
and $\bbW^{(2)}$ are the first and second {\it fattenings}
$\bbW^{(j)} = (m_1+j)P_1 + \cdots + (m_s+j)P_s$ of~$\bbW$.
Using an explicit example, we certify that this result yields a sharp 
bound for the regularity index of~$\Omega^2_{R_\bbW/K}$.

The research underlying this paper and the calculation of the examples were
greatly aided by an implementation of the relevant objects and functions
in the computer algebra system ApCoCoA (see~\cite{ApC}). Unless explicitly 
stated otherwise, we adhere to the definitions and notation introduced in
the books~\cite{KR2000} and~\cite{KR2005}, as well as in our previous 
papers~\cite{KLL2015} and~\cite{KLL2018}.

%%%%%%%%%%%%%%%%%%%%%%%%%%%%%
%
%  Section 2
%
%%%%%%%%%%%%%%%%%%%%%%%%%%%%%

\bigbreak
\section{K{\"a}hler Differentials for Fat Point Schemes}\label{sec2}

Throughout this paper we work over a field~$K$ of characteristic zero,
and we let $S=K[X_0,...,X_n]$ be a standard graded polynomial ring
over~$K$. By~$\mathfrak{M}$ we denote the
homogeneous maximal ideal $\langle X_0, \dots, X_n \rangle$ of~$S$.
The ring $S$ is the homogeneous coordinate ring of the projective
$n$-space $\bbP^n$ over $K$.

Let $P_1,\dots,P_s$ be distinct $K$-rational points in $\mathbb{P}^n$. 
The prime ideals in~$S$ corresponding to the points $P_1,\dots,P_s$
will be denoted by $I_{P_1},\dots, I_{P_s}$, respectively.

\begin{definition} Let $m_1,\dots,m_s$ be positive integers.
\begin{enumerate}
\item[(a)] A zero-dimensional scheme $\bbW$ in $\bbP^n$ is called a 
\textit{fat point scheme} if it is defined by a saturated ideal
of the form $I_{\mathbb{W}}=I_{P_1}^{m_1}\cap \cdots \cap I_{P_s}^{m_s}$.
In this case, we also write
$$
\bbW \;=\; m_1P_1 + \cdots + m_sP_s.
$$

\item[(b)] The number $m_j$ is called the \textit{multiplicity} of the point
$P_j$ for $j=1,\dots,s$.

\item[(c)] If $m_1=\dots =m_s =: m$, we refer to $\bbW$ as an
\textit{equimultiple fat point scheme} and denote it also by $m\mathbb{X}$.

\item[(d)] The set of points $\Supp(\bbW) := \{P_1,\dots,P_s\}$ is called 
the \textit{support} of~$\bbW$. Subse\-quently, we also write 
$\Supp(\bbW)=P_1+\cdots+P_s$.
\end{enumerate}
\end{definition}

It is well known that the homogeneous coordinate ring $R_\bbW:=S/I_\bbW$
of $\bbW$ is a one-dimensional, Cohen-Macaulay, standard graded $K$-algebra.
The Hilbert function $\HF_{R_\bbW}(i):=\dim_K(R_\bbW)_i$ of $R_\bbW$
will be denoted by $\HF_\bbW$. Note that $\HF_\bbW$ is strictly increasing
until it reaches the degree $\deg(\bbW)=\sum_{i=1}^s\binom{m_i+n-1}{n}$
of~$\bbW$ at which it stabilizes. The least integer $i$ for which
$\HF_{\bbW}(i)=\deg(\bbW)$ is called the \textit{regularity index}
of $\HF_\bbW$ and is denoted by $r_\bbW$. In this setting, the Hilbert polynomial
of $\HF_\bbW$ is the constant polynomial given by $\HP_\bbW(z)=\deg(\bbW)$.
\medskip

\begin{definition}~\label{DefnS1_2}
Let $\bbW = m_1P_1+\cdots+m_sP_s$ be a fat point scheme in $\bbP^n$,
let $j\in \{1,\dots,s\}$, and let $\bbW_j\subseteq \bbW$ be the
fat point scheme
$$
\bbW_j = m_1P_1+\cdots+m_{j-1}P_{j-1}+ (m_j-1)P_j
+m_{j+1}P_{j+1}+\cdots+m_sP_s
$$
obtained by reducing the multiplicity of $P_j$ by one.
If $m_j=1$, then $P_j$ does not
appear in the support of $\bbW_j$. Further, let
$\nu_j = \deg(\bbW)-\deg(\bbW_j)$.

\begin{enumerate}
\item[(a)] An element $F\in I_{\bbW_j}\setminus I_\bbW$ is called
a \textit{separator} of $\bbW_j$ in $\bbW$.

\item[(b)] A set of homogeneous polynomials $\{ F_1,\dots,F_t \}$
is called a \textit{minimal set of separators} of $\bbW_j$ in $\bbW$
if $t=\nu_j$ and $I_{\bbW_j} = I_\bbW+\langle F_1,\dots,F_t \rangle$.
\end{enumerate}
\end{definition}

According to \cite[Theorem~3.3]{GMT2010}, a minimal set of separators
of $\bbW_j$ in $\bbW$ always exists. Some basic properties of 
separators are described by
the following lemma which follows from~\cite[Lemma~5.1]{GMT2010}.

\begin{lemma}\label{LemS1_3}
In the setting of Definition~\ref{DefnS1_2}, let $\{F_1,...,F_{\nu_j}\}$
be a minimal set of separators of~$\bbW_j$ in $\bbW$, and suppose that
$\deg(F_1)\le \cdots \le \deg(F_{\nu_j})$.

\begin{enumerate}
\item[(a)] For $k=1,\dots,\nu_j$, we have
$(I_\bbW+\langle F_1,\dots,F_{k-1}\rangle):\langle F_k\rangle = I_{P_j}$.

\item[(b)] For $k=1,\dots,\nu_j$, the ideal
$I_\bbW+\langle F_1,\dots,F_k\rangle$ is a saturated ideal.
\end{enumerate}
\end{lemma}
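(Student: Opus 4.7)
The plan is to obtain both parts as short deductions from \cite[Lemma 5.1]{GMT2010}, organized around the filtration $J_k := I_\bbW + \langle F_1,\dots,F_k\rangle$ with endpoints $J_0 = I_\bbW$ and $J_{\nu_j} = I_{\bbW_j}$. Since $\dim_K(I_{\bbW_j}/I_\bbW)_i = \nu_j$ for all $i\gg 0$ and the set $\{F_1,\dots,F_{\nu_j}\}$ is minimal, each consecutive quotient $J_k/J_{k-1}$ contributes exactly $1$ to the Hilbert function in every sufficiently large degree. This numerical rigidity, together with the ordering $\deg(F_1)\le\cdots\le\deg(F_{\nu_j})$, is the structural input I will use.

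For part~(a), I would first prove the inclusion $I_{P_j} \subseteq (J_{k-1} : \langle F_k\rangle)$ by a direct vanishing-order count. Since $F_k$ is a separator of $\bbW_j$ in $\bbW$, it vanishes to order $\ge m_i$ at each $P_i$ with $i\ne j$ and to order exactly $m_j-1$ at $P_j$ (otherwise $F_k$ would lie in $I_\bbW$). Multiplication by any homogeneous $g \in I_{P_j}$ raises the order at $P_j$ by at least one while preserving the orders at the other points, so $gF_k \in I_\bbW \subseteq J_{k-1}$. The reverse inclusion $(J_{k-1} : \langle F_k\rangle) \subseteq I_{P_j}$ is the structural content of \cite[Lemma 5.1]{GMT2010}: the degree ordering forces $F_k$ to generate, modulo $J_{k-1}$, a cyclic module whose annihilator is precisely the prime $I_{P_j}$ and not any larger prime or intersection.

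Part~(b) then follows by induction on $k$. The base case $J_0 = I_\bbW$ is saturated by hypothesis. For the induction step, suppose the $\mathfrak{M}$-saturation of $J_{k-1}$ strictly contained $J_{k-1}$. Then it would embed into $I_{\bbW_j}/J_{k-1}$ and inflate the Hilbert function beyond the $\nu_j-k+1$ classes $[F_k],\dots,[F_{\nu_j}]$ that already span $I_{\bbW_j}/J_{k-1}$ in large degree; together with part~(a), which identifies the only associated prime of $J_{k-1}$ on which $F_k$ is a nonzerodivisor, this rules out additional saturation elements. The main obstacle I expect is the reverse inclusion in~(a), since this is the only step that genuinely requires structural input from \cite[Lemma 5.1]{GMT2010} rather than formal Hilbert-function bookkeeping; the remainder reduces to dimension counting once that fact is in hand.
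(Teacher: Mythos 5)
The first thing to note is that the paper does not actually prove this lemma: both parts are imported verbatim from \cite[Lemma~5.1]{GMT2010}, so your decision to defer the reverse inclusion of (a) to that reference is consistent with what the authors themselves do. Your elementary vanishing-order argument for the inclusion $I_{P_j}\subseteq (J_{k-1}:\langle F_k\rangle)$ (with your $J_k=I_\bbW+\langle F_1,\dots,F_k\rangle$) is correct; it implicitly uses that $I_{P_i}^{m}$ is saturated because $I_{P_i}$ is generated by a regular sequence of linear forms, so membership really is detected by the local vanishing order.

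The genuine gap is in your argument for (b). The module $J_k^{\mathrm{sat}}/J_k$ measuring the failure of saturation has finite length, hence vanishes in all sufficiently large degrees. Consequently no comparison of Hilbert functions ``in large degree'' --- neither the count $\dim_K(I_{\bbW_j}/J_k)_i=\nu_j-k$ for $i\gg0$ that follows from (a), nor the claim that extra saturation elements would ``inflate'' that count --- can detect whether $J_k$ is saturated: any hypothetical elements of $J_k^{\mathrm{sat}}\setminus J_k$ live only in low degrees and contribute nothing asymptotically. (There is also an off-by-one slip: the induction step should address the saturation of $J_k$, not of $J_{k-1}$, which is already saturated by hypothesis.) Nor does (b) follow formally from (a) via the long exact sequence of local cohomology applied to $0\to J_k/J_{k-1}\to S/J_{k-1}\to S/J_k\to 0$: part (a) gives $J_k/J_{k-1}\cong (S/I_{P_j})(-\deg F_k)$, whose $H^1_{\mathfrak{M}}$ is nonzero, so one only learns that $H^0_{\mathfrak{M}}(S/J_k)$ embeds into a nonzero module. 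A correct treatment must either argue directly that $\mathfrak{M}^N G\subseteq J_k$ forces $G\in J_k$ (exploiting that $(J_{k-1}:\langle F_k\rangle)=I_{P_j}$ is prime together with the saturatedness of $J_{k-1}$ and $I_{\bbW_j}$), or simply cite \cite[Lemma~5.1]{GMT2010} for part (b) as well, as the paper does.
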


By using separators, we want to figure out a connection between
the homoge\-neous vanishing ideals of $\bbW$, of the support
$\bbX:=\Supp(\bbW)$, and of the fat point scheme
$\mathbb{Y}:=(m_1-1)P_1+\cdots+(m_s-1)P_s$ which we call the {\it slimming}
of~$\bbW$. First we have the following relation between the intersection
and the product of two homogeneous ideals in~$S$.

\begin{lemma} \label{LemS1_4}
Let $I, J$ be two homogeneous ideals of $S$.
Then the following state\-ments hold true.

\begin{enumerate}
\item[(a)] The graded module $M=(I\cap J)/(I \cdot J)$ is annihilated by $I+J$.

\item[(b)] If the Hilbert polynomial of $S/(I+J)$ satisfies
$\HP_{S/(I+J)}(z)=0$, then we have the equality
$\HP_{S/(I\cap J)}(z) = \HP_{S/(I\cdot J)}(z)$.
In particular, we have $(I\cap J)_i=(I \cdot J)_i$ for $i\gg 0$.
\end{enumerate}
\end{lemma}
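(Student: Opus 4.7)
For part~(a), the plan is a direct verification. An arbitrary element of $I+J$ can be written as $g+h$ with $g\in I$ and $h\in J$, and any $f\in I\cap J$ satisfies $f\in I$ and $f\in J$ simultaneously. Hence $gf\in I\cdot J$ (writing $f$ as the $J$-factor) and $hf\in I\cdot J$ (writing $f$ as the $I$-factor), so $(g+h)f\in I\cdot J$. This shows $(I+J)(I\cap J)\subseteq I\cdot J$, i.e., $I+J$ annihilates the graded module $M=(I\cap J)/(I\cdot J)$.

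For part~(b), I would exploit the short exact sequence of graded $S$-modules
$$
0\longrightarrow (I\cap J)/(I\cdot J)\longrightarrow S/(I\cdot J)\longrightarrow S/(I\cap J)\longrightarrow 0
$$
arising from the inclusion $I\cdot J\subseteq I\cap J$. Additivity of Hilbert polynomials on short exact sequences yields
$$
\HP_{S/(I\cdot J)}(z)=\HP_{S/(I\cap J)}(z)+\HP_M(z),
$$
so the claim reduces to showing $\HP_M(z)=0$.

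The hypothesis $\HP_{S/(I+J)}(z)=0$ means that the graded $K$-algebra $S/(I+J)$ has finite $K$-dimension. By part~(a), $M$ is a module over $S/(I+J)$, and since $I\cap J$ is finitely generated over the Noetherian ring $S$, the quotient $M$ is a finitely generated $S$-module, hence also a finitely generated $S/(I+J)$-module. A finitely generated module over a finite-dimensional graded $K$-algebra is itself finite-dimensional over~$K$, so $\dim_K M_i=0$ for $i\gg 0$ and therefore $\HP_M(z)=0$.

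Combining these steps gives $\HP_{S/(I\cdot J)}(z)=\HP_{S/(I\cap J)}(z)$. Since $I\cdot J\subseteq I\cap J$, equality of these Hilbert polynomials forces $\dim_K(I\cap J)_i=\dim_K(I\cdot J)_i$ in large degrees, and the inclusion then sharpens to $(I\cap J)_i=(I\cdot J)_i$ for $i\gg 0$. No step appears technically delicate; the only point requiring care is the reminder that $\HP_{S/(I+J)}=0$ really means finite $K$-dimension rather than just vanishing in a single degree, which is what lets us conclude finite-dimensionality of the finitely generated module~$M$.
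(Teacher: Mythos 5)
Your proof is correct and follows essentially the same route as the paper: the same direct verification for (a), and for (b) the same short exact sequence $0\to M\to S/(I\cdot J)\to S/(I\cap J)\to 0$ together with the observation that $M$ is a module over $S/(I+J)$. The only cosmetic difference is that you deduce $\HP_M(z)=0$ from finite $K$-dimensionality of $S/(I+J)$, whereas the paper phrases the same fact via $\dim(S/\Ann_S(M))=0$ and Krull-dimension results from~\cite{KR2005}.
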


\begin{proof}
Claim (a) follows from the fact that
for $f\in I$, $g\in J$ and $h\in I\cap J$ we have
$(f+g)h = fh + gh \in I\cdot J$.

Now we prove (b). Since we have $\HP_{S/(I+J)}(z)=0$,
there exists $i_0\in \bbN$ such that
$(I+J)_i=\mathfrak{M}_i$ for all $i\ge i_0$.
It follows from (a) that $I+J\subseteq \Ann_S(M)$, and hence
$(\Ann_S(M))_i=\mathfrak{M}_i$ for all $i\ge i_0$.
This implies $\dim(S/\Ann_S(M))=0$.
Using \cite[Theorem 5.4.10]{KR2005}, we then get
$\dim(M)=\dim(S/\Ann(M))=0$. Consequently, by \cite[Theorem 5.4.15]{KR2005},
the Hilbert polynomial of~$M$ is $\HP_M(z)=0$. Thus~(b) follows from the 
homogeneous exact sequence
\begin{equation*}
0 \;\longrightarrow\; M \;\longrightarrow\; S/(I\cdot J) \;\longrightarrow\;
S/(I\cap J) \;\longrightarrow\; 0 \tag*{$\square$}
\end{equation*}
\end{proof}

\begin{remark}\label{RemS1_5}
Let $\bbW = m_1P_1+\cdots + m_sP_s$ and
$\bbV = m'_1P'_1+\cdots+m'_tP'_t$
be fat point schemes in $\mathbb{P}^n$ such that
$\Supp(\bbW)\cap\Supp(\bbV)=\emptyset$.
According to \cite[Proposition 5.4.16]{KR2005}, we have
$$
\HP_{S/(I_\bbW\cap I_\bbV)}(z) = \HP_{S/I_\bbW}(z)+\HP_{S/I_\bbV}(z)
- \HP_{S/(I_\bbW+I_\bbV)}(z).
$$
The assumption yields that
$\HP_{S/(I_\bbW\cap I_\bbV)}(z) = \HP_{S/I_\bbW}(z)+\HP_{S/I_\bbV}(z)$,
and so we get $\HP_{S/(I_\bbW+I_\bbV)}(z)=0$.
In this case the lemma implies
$\HP_{S/(I_\bbW\cap I_\bbV)}(z) =\HP_{S/ (I_\bbW \cdot I_\bbV)}(z)$.
\end{remark}

\begin{proposition}\label{PropS1_6}
Let $\bbW = m_1P_1+\cdots+m_sP_s$ be a fat point scheme
supported at $\bbX$ in $\mathbb{P}^n$, and let $\bbY$ be
the slimming $\bbY = (m_1-1)P_1+\cdots+(m_s-1)P_s$ of $\bbW$.

\begin{enumerate}
\item[(a)] We have $I_\bbW :_S I_\bbY = I_\bbX$.

\item[(b)] There exists $i_0\in \mathbb{N}$ such that
for all $i\ge i_0$ we have $(I_\bbW)_i=(I_\bbX \cdot I_\bbY)_i$.
\end{enumerate}
\end{proposition}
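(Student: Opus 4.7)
The approach is to work prime-by-prime, using the primary decompositions $I_\bbW = \bigcap_{i=1}^s I_{P_i}^{m_i}$ and $I_\bbY = \bigcap_{i=1}^s I_{P_i}^{m_i-1}$ (with the convention $I_{P_i}^0=S$). For~(a), the inclusion $I_\bbX\subseteq I_\bbW:_S I_\bbY$ is immediate: if $f\in I_\bbX$ and $g\in I_\bbY$, then for each $i$ one has $f\in I_{P_i}$ and $g\in I_{P_i}^{m_i-1}$, hence $fg\in I_{P_i}\cdot I_{P_i}^{m_i-1}=I_{P_i}^{m_i}$; intersecting over $i$ gives $fg\in I_\bbW$.

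For the reverse inclusion in (a), I fix $f$ with $f\cdot I_\bbY\subseteq I_\bbW$ and a single index $i$, and construct a multiplier $u:=\prod_{j\ne i}a_j^{m_j-1}$ with $a_j\in I_{P_j}\setminus I_{P_i}$ (which exists because the distinct $K$-rational points have distinct height-$n$ homogeneous primes, so $I_{P_j}\not\subseteq I_{P_i}$). This $u$ lies in $\bigcap_{j\ne i}I_{P_j}^{m_j-1}$ but outside $I_{P_i}$, so $uh\in I_\bbY$ for every $h\in I_{P_i}^{m_i-1}$, whence $fuh\in I_\bbW\subseteq I_{P_i}^{m_i}$. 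Thus $fu\in I_{P_i}^{m_i}:_S I_{P_i}^{m_i-1}$. Passing to the regular local ring $S_{I_{P_i}}$ and writing $\fp:=I_{P_i}S_{I_{P_i}}$, the strict descent $\fp^{m_i-1}\supsetneq\fp^{m_i}$ together with Nakayama gives $\fp^{m_i}:\fp^{m_i-1}=\fp$, so $fu\in I_{P_i}S_{I_{P_i}}\cap S=I_{P_i}$. Primality of $I_{P_i}$ and $u\notin I_{P_i}$ then force $f\in I_{P_i}$, and intersecting over $i$ yields $f\in I_\bbX$.

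For (b), part (a) already gives $I_\bbX\cdot I_\bbY\subseteq I_\bbW$, and my plan is to show the graded quotient $M:=I_\bbW/(I_\bbX\cdot I_\bbY)$ has finite length, from which $M_i=0$ for $i\gg 0$ follows. Since $I_\bbW\subseteq I_\bbX\cap I_\bbY$, both $I_\bbX$ and $I_\bbY$ annihilate $M$, so $\Supp(M)\subseteq V(I_\bbX+I_\bbY)$, which is contained in $\{I_{P_1},\dots,I_{P_s},\mathfrak{M}\}$ (the cross terms $I_{P_i}+I_{P_j}$ with $i\ne j$ cut out only $\mathfrak{M}$ because the points are distinct). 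Localizing at each $I_{P_i}$, the same computation as in~(a) shows that both $I_\bbW$ and $I_\bbX\cdot I_\bbY$ become $I_{P_i}^{m_i}S_{I_{P_i}}$, so $M_{I_{P_i}}=0$; hence $\Supp(M)\subseteq\{\mathfrak{M}\}$ and $M$ is $\mathfrak{M}$-torsion, therefore finite-dimensional over $K$. The main obstacle is the local colon identity $\fp^{m_i}:\fp^{m_i-1}=\fp$ used in~(a): this is where the regularity of $S_{I_{P_i}}$ is essential, guaranteeing that successive powers of $\fp$ strictly descend (otherwise Nakayama would force $\fp^{m_i-1}=0$, contradicting that a regular local ring is a domain); everything else reduces to standard bookkeeping with primary decomposition and localization.
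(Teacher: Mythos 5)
Your proof is correct, but it takes a genuinely different route from the paper's. For part (a), the paper argues with separators: it picks a minimal set of separators $\{F_{j1},\dots,F_{j\nu_j}\}$ of $\bbW_j$ in $\bbW$ and uses the identity $I_\bbW :_S \langle F_{j1}\rangle = I_{P_j}$ from \cite[Lemma~5.1]{GMT2010} to force any $F\in (I_\bbW:_S I_\bbY)\setminus I_\bbX$ into $I_{P_j}$, a contradiction. You instead avoid the separator machinery entirely: your multiplier $u=\prod_{j\ne i}a_j^{m_j-1}$ reduces the problem to the local colon identity $\fp^{m_i}:\fp^{m_i-1}=\fp$ in the regular local ring $S_{I_{P_i}}$, which you justify correctly via Nakayama and the domain property (and the contraction $I_{P_i}S_{I_{P_i}}\cap S=I_{P_i}$ is valid since $I_{P_i}$ is prime). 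For part (b), the paper proves $\HP_{S/(I+J)}=0\Rightarrow (I\cap J)_i=(I\cdot J)_i$ for $i\gg0$ (Lemma~\ref{LemS1_4}), then inducts on $s$ to rewrite $(I_\bbW)_i$ as $(I_{P_1}^{m_1}\cdots I_{P_s}^{m_s})_i$, using that each $I_{P_j}$ is a complete intersection, and finally regroups the product. You instead show directly that $M=I_\bbW/(I_\bbX\cdot I_\bbY)$ has support in $\{\mathfrak{M}\}$ by checking $M_{I_{P_i}}=0$ for each $i$ (flatness of localization makes the computation of both $I_\bbW S_{I_{P_i}}$ and $(I_\bbX\cdot I_\bbY)S_{I_{P_i}}$ immediate), whence $M$ has finite length; note that the annihilation by $I_\bbX$ alone already confines $\Supp(M)$ to $\{I_{P_1},\dots,I_{P_s},\mathfrak{M}\}$, so your remark about $I_\bbY$ and the cross terms is superfluous though harmless. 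Your approach is more self-contained and arguably more conceptual; the paper's approach has the side benefit of establishing the intermediate identity $(I_\bbW)_i=(I_{P_1}^{m_1}\cdots I_{P_s}^{m_s})_i$ for $i\gg0$ in the form in which it is reused later (e.g., in the proof of Theorem~\ref{ThmS2_7}), and of isolating Lemma~\ref{LemS1_4} as a reusable tool.
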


\begin{proof} (a)\quad
Clearly, we have $I_\bbX\cdot I_\bbY\subseteq I_\bbW$,
and so $I_\bbX\subseteq I_\bbW:_S I_\bbY$. For the other inclusion,
let $\nu_j=\binom{m_j+n-2}{n-1}$ and let
$\{F_{j1},...,F_{j\nu_j}\}$ be a minimal set of separators
of~$\bbW_j$ in $\bbW$ such that
$\deg(F_{j1})\le\cdots\le \deg(F_{j\nu_j})$, where
$\bbW_j = m_1P_1+\cdots+m_{j-1}P_{j-1}+ (m_j-1)P_j
+m_{j+1}P_{j+1}+\cdots+m_sP_s$.
Then, by Lemma~\ref{LemS1_3}, we have
$$
I_\bbY = I_\bbW + \langle F_{11},\dots,F_{1\nu_1},\dots,
F_{s1},\dots,F_{s\nu_s}\rangle.
$$
Suppose for a contradiction that there exists a homogeneous
element $F\in (I_\bbW :_S I_\bbY)\setminus I_\bbX$.
Since $F\notin I_\bbX$, there exists $j\in\{1,\dots,s\}$
such that $F \notin I_{P_j}$.
On the other hand, we have $F\cdot F_{jk}\in I_\bbW$ for all $j=1,\dots,s$ and $k=1,\dots,\nu_j$.
In particular, we get $F \in I_\bbW :_S \langle F_{j1}\rangle$.
Also, by Lemma~\ref{LemS1_3}, the separator $F_{j1}$ satisfies
$I_\bbW :_S \langle F_{j1}\rangle = I_{P_j}$.
Hence we obtain $F \in I_{P_j}$, a contradiction.

\medskip (b)\quad
Note that $(I_\bbX\cdot I_\bbY)_i\subseteq (I_\bbW)_i$
for all $i\in\bbN$. Set $\bbV := m_1P_1+\cdots+m_{s-1}P_{s-1}$.
It follows from Remark~\ref{RemS1_5} that
$$
\HP_{S/(I_\bbV + I_{m_sP_s})}(z)=0.
$$
An application of Lemma~\ref{LemS1_4} yields that
there exists $t\in \bbN$ such that for all $i\ge t$
we have $(I_\bbW)_i =  (I_\bbV \cdot I_{m_sP_s})_i$.
By induction on $s$ we find $i_0\in\bbN$ such that
\begin{align*}
(I_\bbW)_i &=  (I_\bbV \cdot I_{m_sP_s})_i
  =(I_{m_1P_1}\cdots I_{m_{s-1}P_{s-1}}\cdot I_{m_sP_s})_i \\
  &\stackrel{(\ast)}{=} (I_{P_1}^{m_1}\cdots I_{P_s}^{m_s})_i \\
  &= (I_{P_1}\cdots I_{P_s}\cdot I_{P_1}^{m_1-1}\cdots I_{P_s}^{m_s-1})_i
\end{align*}
for all $i\ge i_0$, where the equality $(\ast)$ follows from
the fact that $I_{P_j}$ is a complete intersection ideal
for $j=1,...,s$. Moreover, observe that
$I_{P_1}\cdots I_{P_s}\subseteq I_\bbX$ and
$I_{P_1}^{m_1-1}\cdots I_{P_s}^{m_s-1} \subseteq I_\bbY$.
This implies $(I_\bbW)_i \subseteq (I_\bbX\cdot I_\bbY)_i$,
and therefore we get the equality
$(I_\bbW)_i =(I_\bbX\cdot I_\bbY)_i$.
\qed\end{proof}

Notice that if $\bbW$ is an equimultiple fat point scheme
in $\bbP^n$ such that its support $\bbX$ is a complete intersection,
then claim (b) of Proposition~\ref{PropS1_6} holds true
for $i_0=0$. However, when $\bbW$ is not
an equimultiple fat point scheme
and~$\bbX$ is a complete intersection,
the following example shows that the number $i_0$
in Proposition~\ref{PropS1_6}.b should be chosen large enough.

\begin{example} \label{ExamS1_1}
Let $\bbX\subseteq \bbP^2$ be the scheme
$\bbX = P_1+P_2+\cdots+P_8$ consisting of eight points
given by $P_1=(1:0:0)$, $P_2=(1:0:1)$, $P_3=(1:1:0)$, $P_4=(1:1:1)$,
$P_5=(1:2:0)$, $P_6=(1:2:1)$, $P_7=(1:3:0)$ and $P_8=(1:3:1)$.
The homogeneous vanishing ideal of $\bbX$ is given by
$$
I_\bbX =
\langle X_0X_2 -X_2^2,  6X_0^3X_1 -11X_0^2X_1^2 +6X_0X_1^3 -X_1^4 \rangle
\subseteq S=K[X_0,X_1,X_2],
$$
and so $\bbX$ is a complete intersection of type $(2,4)$.

Now we consider the fat point scheme $\bbW = P_1+2P_2+P_3+2P_4+2P_5+P_6+5P_7+P_8$
supported at~$\bbX$. Let the subscheme $\bbY$ of~$\bbW$ be
given by $\bbY=P_2+P_4+P_5+4P_7$.
A calculation using ApCoCoA (see~\cite{ApC}) yields the Hilbert functions
\begin{align*}
&\HF_\bbX:\ 1\ 3\ 5\ 7\ 8\ 8\ \cdots, &\!&
\HF_\bbY:\ 1\ 3\ 6\ 10\ 13\ 13\cdots, \\
&\HF_\bbW:\ 1\ 3\ 6\ 10\ 15\ 21\ 26\ 27\ 28\ 28\cdots, &\!&
\HF_{S/(I_\bbX\cdot I_\bbY)}\!:\ 1\ 3\ 6\ 10\ 15\ 21\ 26\ 28\ 28
\cdots\!.
\end{align*}
This implies that $\HF_\bbW(7) =27 < 28 = \HF_{S/(I_\bbX\cdot I_\bbY)}(7)$.
Hence we get $I_\bbW \ne I_\bbX \cdot I_\bbY$. But we may check that
$(I_\bbW)_i = (I_\bbX \cdot I_\bbY)_i$ for all $i\ge 8$.
\end{example}

The next example shows that, if the support of
an equimultiple fat point scheme $\bbW$ is not a complete intersection,
then Proposition~\ref{PropS1_6}.b does not always hold for all $i\in \bbN$.

\begin{example} \label{ExamS1_2}
Let us consider the set $\bbX'=\bbX\setminus\{P_4\}\subseteq \bbP^2$
where $\bbX$ is the complete intersection given in Example~\ref{ExamS1_1}.
Then $\bbX'$ is an almost complete intersection with
$$
I_{\bbX'} = I_\bbX +
\langle X_1^3X_2 -5X_1^2X_2^2 +6X_1X_2^3 \rangle
\subseteq S=K[X_0,X_1,X_2].
$$
The double point scheme
$\bbW' = 2P_1+2P_2+2P_3+2P_5+2P_6+2P_7+2P_8 \subseteq \bbP^2$
supported at $\bbX'$
has the Hilbert function $\HF_{\bbW'}:\ 1\ 3\ 6\ 10\ 14\ 18\ 20\ 21\ 21\cdots$.
In this case the Hilbert function of $S/I_{\bbX'}^2$ is given by
$\HF_{S/I_{\bbX'}^2}:\ 1\ 3\ 6\ 10\ 14\ 18\ 20\ 22\ 21\ 21\cdots$.
Therefore we obtain
$\HF_{\bbW'}(7) =21 < 22 = \HF_{S/I_{\bbX'}^2}(7)$, and hence
$I_{\bbW'}\ne I_{\bbX'}\cdot I_{\bbX'}$.
\end{example}

Now we introduce the algebraic object associated to $\bbW$
which we are most interested in.
The enveloping algebra of $R_\bbW$ is the graded algebra
$R_\bbW\otimes_K R_\bbW =
\bigoplus_{i\ge 0}(\bigoplus_{j+k=i} (R_\bbW)_j\otimes(R_\bbW)_k)$.
Let $\mu:R_\bbW \otimes_K R_\bbW \rightarrow R_\bbW$ be
the canonical multiplication map given by $\mu(f\otimes g)=fg$
for all $f,g\in R_\bbW$. This map is homogeneous of degree zero
and its kernel $\mathcal{J}:=\Ker(\mu)$ is a homogeneous ideal
of~$R_\bbW\otimes_K R_\bbW$.

\begin{definition} Let $k$ be a positive integer.
\begin{enumerate}
\item[(a)] The graded $R_\bbW$-module
$\Omega^1_{R_\bbW/K} :=\mathcal{J}/\mathcal{J}^2$
is called the \textit{module of K{\"a}hler differentials}
of $R_\bbW/K$. The homogeneous $K$-linear map
$d:R_\bbW\rightarrow \Omega^1_{R_\bbW/K}$ given by
$f\mapsto f\otimes 1-1\otimes f + \mathcal{J}^2$ 
is called the \textit{universal derivation} of~$R_\bbW/K$.

\item[(b)] The exterior power
$\Omega^k_{R_\bbW/K}:= \bigwedge_{R_\bbW}^k \Omega^1_{R_\bbW/K}$
is called the \textit{module of K{\"a}hler differential $k$-forms}
of $R_\bbW/K$.
\end{enumerate}
\end{definition}

For $i=0,\dots,n$, we denote the image of $X_i$ in~$R_\bbW$ by $x_i$.
Then we have $\deg(dx_i)=\deg(x_i)=1$ and
$\Omega^1_{R_\bbW/K} = R_\bbW dx_0+\cdots+R_\bbW dx_n$.
Hence we see that $\Omega^k_{R_\bbW/K}=0$ for all $k\ge n+2$, and 
$\Omega^k_{R_\bbW/K}$ is a finitely generated $R$-module for all $k\ge 1$.
Moreover, from \cite[Proposition 4.12]{Kun1986} or 
\cite[Proposition~3.2.11]{TNKL2015} we get the
following presentation for $\Omega^k_{R_\bbW/K}$.

\begin{proposition} \label{PropS1_10}
Let $1\le k\le n+1$ and let
$dI_\bbW = \langle dF \mid F\in I_\bbW \rangle$.
Then the graded $R_\bbW$-module $\Omega^k_{R_\bbW/K}$
has a presentation
$$
\Omega^k_{R_\bbW/K}\cong \Omega^k_{S/K}/
(I_\bbW \Omega^k_{S/K}+ dI_\bbW\Omega^{k-1}_{S/K}).
$$
\end{proposition}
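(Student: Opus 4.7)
The plan is to reduce the statement to the well-known case $k=1$ (the conormal sequence) and then obtain the general assertion by passing to exterior powers. Since this is essentially a standard fact about the K\"ahler differentials of a quotient ring, the work consists of writing down the relevant identifications carefully in the graded setting.

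For the base case $k=1$, I would invoke the first fundamental (conormal) exact sequence
$$
I_\bbW/I_\bbW^2 \;\xrightarrow{\ \delta\ }\; \Omega^1_{S/K}\otimes_S R_\bbW \;\longrightarrow\; \Omega^1_{R_\bbW/K} \;\longrightarrow\; 0,
$$
where $\delta(\overline F) = dF\otimes 1$. Because $\Omega^1_{S/K}$ is a free $S$-module on $dX_0,\dots,dX_n$, the middle term identifies homogeneously with $\Omega^1_{S/K}/I_\bbW\Omega^1_{S/K}$, and the image of $\delta$ is precisely the submodule generated by $\{dF\mid F\in I_\bbW\}$, that is, $(dI_\bbW + I_\bbW\Omega^1_{S/K})/I_\bbW\Omega^1_{S/K}$. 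This settles the case $k=1$.

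For $k\ge 2$, the idea is to take the $k$-th exterior power over $R_\bbW$ of the isomorphism produced in the previous step. Since the exterior power commutes with base change, there is a canonical homogeneous isomorphism
$$
{\textstyle\bigwedge}^k_{R_\bbW}\bigl(\Omega^1_{S/K}\otimes_S R_\bbW\bigr) \;\cong\; \Omega^k_{S/K}\otimes_S R_\bbW \;\cong\; \Omega^k_{S/K}/I_\bbW\Omega^k_{S/K}.
$$
Then I would apply the general identity stating that, for a surjection $\varphi:M\twoheadrightarrow M/N$ of modules over a commutative ring, one has $\Ker(\bigwedge^k\varphi)= N\wedge \bigwedge^{k-1}M$. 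Applied to $\varphi:\Omega^1_{S/K}/I_\bbW\Omega^1_{S/K}\twoheadrightarrow \Omega^1_{R_\bbW/K}$, this identifies the kernel of the induced surjection onto $\Omega^k_{R_\bbW/K}=\bigwedge^k_{R_\bbW}\Omega^1_{R_\bbW/K}$ with $dI_\bbW\wedge\Omega^{k-1}_{S/K}$ modulo $I_\bbW\Omega^k_{S/K}$, which yields the claimed presentation.

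The main technical point is the exterior-power-of-a-quotient identity used in the last step: one must check that every element of $\bigwedge^k_{R_\bbW}M$ that vanishes in $\bigwedge^k_{R_\bbW}(M/N)$ already lies in $N\wedge\bigwedge^{k-1}_{R_\bbW}M$. This is a purely module-theoretic statement that follows from the universal property of the exterior algebra (or, equivalently, from the right-exactness of $\bigwedge^k$ applied to $0\to N\to M\to M/N\to 0$). It is precisely the content of \cite[Proposition~4.12]{Kun1986}, which makes the argument essentially a bookkeeping exercise in grading once the key identification is in place.
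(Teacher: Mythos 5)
Your argument is correct and is essentially the one the paper relies on: the paper gives no proof of its own but simply cites \cite[Proposition~4.12]{Kun1986} (and \cite[Proposition~3.2.11]{TNKL2015}), which is exactly the conormal-sequence-plus-exterior-powers argument you spell out. The one step worth stating precisely if you write this out in full is the presentation $N\otimes_{R_\bbW}\bigwedge^{k-1}M \to \bigwedge^k M \to \bigwedge^k(M/N)\to 0$ for a quotient $M/N$, which is the standard fact underlying your kernel identification.
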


Given a non-zero homogeneous ideal~$I$ of~$S$, we denote
the homogeneous ideal
$\langle \frac{\partial F}{\partial X_i} | F\in I, 0\le i\le n \rangle
\subseteq S$ by~$\partial I$.
The ideal $\partial I$ is also known as the
\textit{$n$-th Jacobian ideal} (or the \textit{$n$-th K\"ahler different})
of the $K$-algebra $S/I$ (see \cite[Section~10]{Kun1986}).
If $\{F_1,\dots,F_r\}$ is a set of generators of~$I$, then
$\partial I$ is generated by all $1$-minors of
the Jacobian matrix
$(\frac{\partial F_k}{\partial X_j})_{0\le j\le n,1\le k\le n}$,
i.e., $\partial I = \langle \frac{\partial F_k}{\partial X_j}
\mid 0\le j\le n, 1\le k\le r\rangle$.
Furthermore, by Euler's relation \cite[Section~1]{Kun1986},
we always have $I\subseteq \partial I$.

\begin{lemma}\label{LemS1_11}
Let $I$ and $J$ be two proper homogeneous ideal of $S$
such that $I_i=J_i$ for $i\gg 0$. Then we have
$(\partial I)_i=(\partial J)_i$ for $i\gg 0$.
\end{lemma}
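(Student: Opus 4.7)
The plan is to prove $(\partial I)_i \subseteq (\partial J)_i$ for $i\gg 0$; by symmetry (swapping the roles of $I$ and $J$) the reverse inclusion will then be automatic. Pick $N\in\bbN$ with $I_i = J_i$ for all $i\ge N$, and fix homogeneous generators $F_1,\dots,F_r$ of $I$ of positive degrees $d_k:=\deg F_k\ge 1$ (possible because $I$ is proper). Since $\partial I$ is generated by the partial derivatives $\partial F_k/\partial X_j$, it suffices to produce, for every pair $(k,j)$, an exponent $M_{k,j}\in\bbN$ such that
$$\mathfrak{M}^{M_{k,j}} \cdot \frac{\partial F_k}{\partial X_j} \;\subseteq\; \partial J;$$
for any $i$ bigger than $\max_{k,j}(M_{k,j}+d_k-1)$, the presentation $(\partial I)_i = \sum_{k,j} S_{\,i-d_k+1}\cdot \partial F_k/\partial X_j$ will then yield $(\partial I)_i\subseteq(\partial J)_i$.

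To obtain such an $M_{k,j}$, fix $F=F_k$ of degree $d=d_k$ and apply the Leibniz rule to an arbitrary monomial $X^\alpha$:
$$X^\alpha \cdot \frac{\partial F}{\partial X_j} \;=\; \frac{\partial (X^\alpha F)}{\partial X_j} \;-\; \alpha_j\, X^{\alpha-e_j} F,$$
with the convention that the second summand vanishes when $\alpha_j=0$. As soon as $|\alpha|\ge N-d+1$, both $X^\alpha F$ and (when $\alpha_j>0$) $X^{\alpha-e_j}F$ lie in $I_{\ge N} = J_{\ge N}\subseteq J$. The first summand on the right then lies in $\partial J$ by definition, while the second lies in $J$, hence in $\partial J$ via Euler's relation $e\cdot G = \sum_i X_i\,\partial G/\partial X_i$ applied to each homogeneous component of degree $e>0$ (valid because $\mathrm{char}\,K=0$, and irrelevant in degree $0$ since $J$ is proper). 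Therefore $M_{k,j}:=\max(N-d+1,\,0)$ does the job.

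Essentially no subtle obstacle remains; the only point worth flagging is the appeal to Euler's relation, which is what allows the ``shift term'' $\alpha_j X^{\alpha-e_j}F$ produced by Leibniz to be absorbed into $\partial J$, and which is the place where the hypothesis $\mathrm{char}\,K = 0$ is actually used.
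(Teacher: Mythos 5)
Your proof is correct and follows essentially the same route as the paper: the key step in both is the Leibniz identity $G\,\partial F/\partial X_j = \partial(GF)/\partial X_j - F\,\partial G/\partial X_j$ together with $J\subseteq\partial J$ (Euler's relation) to absorb the correction term. The only cosmetic difference is that you reduce to monomial multiples of the partial derivatives of a fixed generating set, whereas the paper argues directly on an arbitrary element of $(\partial I)_i$ for $i\ge i_0$.
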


\begin{proof}
It suffices to prove the inclusion
$(\partial I)_i\subseteq (\partial J)_i$ for $i\gg 0$,
since~$I$ and~$J$ may be interchanged.
Let $i_0\in\bbN$ be a number such that $I_i=J_i$ for all $i\ge i_0$.
For $i\ge i_0$ let $F\in (\partial I)_i$.
There are homogeneous polynomials $G_{jk}\in S$ and
$H_{jk}\in I$ such that
$F=\sum_{j=0}^n\sum_{k=1}^m G_{jk}\frac{\partial H_{jk}}{\partial X_j}$.
Then, for $j\in\{0,...,n\}$ and $k\in\{1,...,m\}$, we have
$G_{jk}\frac{\partial H_{jk}}{\partial X_j}=
\frac{\partial(G_{jk}H_{jk})}{\partial X_j} -
H_{jk}\frac{\partial G_{jk}}{\partial X_j}$.
Obviously, $G_{jk}H_{jk} \in I_{i+1}=J_{i+1}$, and hence
$\frac{\partial(G_{jk}H_{jk})}{\partial X_j} \in (\partial J)_{i}$.
Also, we have $J \subseteq \partial J$ and
$H_{jk}\frac{\partial G_{jk}}{\partial X_j}\in I_i
=J_i\subseteq (\partial J)_i$. Thus
$G_{jk}\frac{\partial H_{jk}}{\partial X_j}\in (\partial J)_i$.
Consequently, we get $F\in (\partial J)_i$, as we wanted to show.
\qed\end{proof}

As a consequence of Proposition~\ref{PropS1_10}, we obtain the following
explicit descrip\-tion for the module of K{\"a}hler differential ($n+1$)-forms
of $R_\bbW /K$ (see also \cite[Corollary~2.3]{KLL2018}).

\begin{corollary}\label{CorS1_12}
There is an isomorphism of graded $R_\bbW$-modules
$$
\Omega^{n+1}_{R_\bbW /K}\cong (S/\partial I_\bbW)(-n-1)
$$
In particular, we have $\HF_{\Omega^{n+1}_{R_\bbW /K}}(i)=\HF_{S/\partial I_\bbW}(i-n-1)$
for all $i\in\bbZ$.
\end{corollary}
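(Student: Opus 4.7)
The plan is to apply Proposition~\ref{PropS1_10} with $k=n+1$ and then to identify each piece of the resulting presentation very explicitly, using that $S=K[X_0,\dots,X_n]$ is a polynomial ring in exactly $n+1$ variables.

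First I would record that $\Omega^1_{S/K}$ is the free graded $S$-module $S\,dX_0\oplus\cdots\oplus S\,dX_n$, each $dX_i$ placed in degree one. Consequently the top exterior power $\Omega^{n+1}_{S/K}$ is a free $S$-module of rank one, generated by the form $\omega:=dX_0\wedge dX_1\wedge\cdots\wedge dX_n$ sitting in degree $n+1$. The map $S(-n-1)\to\Omega^{n+1}_{S/K}$ sending $1$ to $\omega$ is therefore an isomorphism of graded $S$-modules, which will be the basis for the whole identification.

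Next I would compute the submodule $dI_\bbW\,\Omega^n_{S/K}\subseteq\Omega^{n+1}_{S/K}$. A typical generator of $\Omega^n_{S/K}$ is $\omega_j:=dX_0\wedge\cdots\wedge\widehat{dX_j}\wedge\cdots\wedge dX_n$, and for $F\in I_\bbW$ one has $dF=\sum_{i=0}^n\tfrac{\partial F}{\partial X_i}\,dX_i$, so $dF\wedge\omega_j=\pm\tfrac{\partial F}{\partial X_j}\,\omega$. Hence under the identification $\Omega^{n+1}_{S/K}\cong S(-n-1)$, the submodule $dI_\bbW\,\Omega^n_{S/K}$ corresponds precisely to $\partial I_\bbW\cdot S(-n-1)$. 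Moreover Euler's relation recalled before Lemma~\ref{LemS1_11} gives $I_\bbW\subseteq\partial I_\bbW$, so the summand $I_\bbW\,\Omega^{n+1}_{S/K}$ is already contained in $dI_\bbW\,\Omega^n_{S/K}$ and the denominator in Proposition~\ref{PropS1_10} simplifies to $(\partial I_\bbW)(-n-1)\cdot\omega$.

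Combining these observations yields the chain of graded $R_\bbW$-module isomorphisms
$$
\Omega^{n+1}_{R_\bbW/K}\;\cong\;\Omega^{n+1}_{S/K}\big/\bigl(I_\bbW\Omega^{n+1}_{S/K}+dI_\bbW\,\Omega^n_{S/K}\bigr)\;\cong\;S(-n-1)/\partial I_\bbW\,S(-n-1)\;\cong\;(S/\partial I_\bbW)(-n-1),
$$
from which the Hilbert function formula $\HF_{\Omega^{n+1}_{R_\bbW/K}}(i)=\HF_{S/\partial I_\bbW}(i-n-1)$ is an immediate consequence of the degree shift. There is no real obstacle; the only delicate point is the sign bookkeeping in the wedge computation $dF\wedge\omega_j$, which is routine once one fixes the ordered basis $\omega_0,\dots,\omega_n$.
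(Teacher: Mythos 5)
Your proposal is correct and follows exactly the route the paper intends: the corollary is derived from Proposition~\ref{PropS1_10} with $k=n+1$, identifying $\Omega^{n+1}_{S/K}\cong S(-n-1)$ via $dX_0\wedge\cdots\wedge dX_n$, noting that $dI_\bbW\,\Omega^n_{S/K}$ maps onto $\partial I_\bbW$, and absorbing $I_\bbW\Omega^{n+1}_{S/K}$ by Euler's relation $I_\bbW\subseteq\partial I_\bbW$. The paper leaves these verifications implicit (citing its Proposition~\ref{PropS1_10} and the earlier reference), and your write-up supplies them correctly.
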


%%%%%%%%%%%%%%%%%%%%%%%%%%%%%
%
%  Section 3
%
%%%%%%%%%%%%%%%%%%%%%%%%%%%%%

\bigbreak
\section{Hilbert Polynomials of K{\"a}hler Differential Modules}%
\label{sec3}

In this section we look at the Hilbert function of the module of
K\"ahler differential $k$-forms of a fat point scheme
$\bbW = m_1P_1+\cdots+m_sP_s$ in $\bbP^n$, where $1\le k\le n+1$.
Especially, for the case $k=n+1$, we determine a formula
for the Hilbert polynomial of the module of K\"ahler differential
$k$-forms of~$\bbW$.

Clearly, the ring $R_\bbW$ is Noetherian and the module of
K\"ahler differential $k$-forms $\Omega^k_{R_\bbW/K}$ is
a finitely generated graded $R_\bbW$-module.
Thus the Hilbert poly\-nomial $\HP_{\Omega^k_{R_\bbW/K}}(z)\in \bbQ[z]$
of~$\Omega^k_{R_\bbW/K}$ exists (see e.g.~\cite[Theorem~4.1.3]{BH1993}).
The \textit{(Hilbert) regularity index}
of~$\Omega^k_{R_\bbW/K}$ is defined by
$\ri(\Omega^k_{R_\bbW/K}) := \min\{ i\in \bbZ \mid
\HF_{\Omega^k_{R_\bbW/K}}(j)=\HP_{\Omega^k_{R_\bbW/K}}(j)$
for all $j\ge i \}$.

The Hilbert polynomial of $\Omega^k_{R_\bbW/K}$ is easily shown
to be a constant polynomial. However,
except the case $n=1$ (see e.g. \cite{Rob1989}), 
to determine the Hilbert polynomial
of $\Omega^k_{R_\bbW/K}$ is an interesting non-trivial task.
In \cite[Sections~4 and 5]{KLL2018}, the authors
gave the following bounds for the Hilbert polynomial
$\HP_{\Omega^k_{R_\bbW/K}}(z)$ and its regularity index.

\begin{proposition}\label{PropS2_1}
Let $\bbW=m_1P_1+\cdots+m_sP_s$ be a fat point scheme
supported at $\bbX$ in~$\mathbb{P}^n$, and let $\bbV$ be
the fat point scheme $\bbV =(m_1+1)P_1+\cdots+(m_s+1)P_s$.
Then the Hilbert polynomial $\HP_{\Omega^k_{R_\bbW/K}}(z)$ satisfies
$$
\sum_{i=1}^s \tbinom{n+1}{k}\tbinom{m_i+n-2}{n}
\le \HP_{\Omega^k_{R_\bbW/K}}(z) \le
\sum_{i=1}^s \tbinom{n+1}{k}\tbinom{m_i+n-1}{n}
$$
and $\ri(\Omega^k_{R_\bbW/K}) \le \min\{\max\{r_\bbW + k, r_\bbV+k-1\},
\max\{r_\bbW + n, r_\bbV+n-1\}\}$.
In particular, for $\nu \ge 1$, we have
$\HP_{\Omega^{n+1}_{R_{\bbX}/K}}(z)=0$ and
$$
\HP_{\Omega^{n+1}_{R_{(\nu+1)\bbX}/K}}(z)=
\HP_{\nu\bbX}(z)= s\tbinom{\nu+n-1}{n}.
$$
\end{proposition}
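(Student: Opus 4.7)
The plan is to derive both bounds from the presentation of $\Omega^k_{R_\bbW/K}$ given in Proposition~\ref{PropS1_10} and to settle the ``In particular'' claims by a closer analysis of the Jacobian ideal.

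For the upper bound, Proposition~\ref{PropS1_10} immediately yields a surjection of graded $R_\bbW$-modules $(R_\bbW)^{\binom{n+1}{k}} \cong \Omega^k_{S/K}/I_\bbW\Omega^k_{S/K} \twoheadrightarrow \Omega^k_{R_\bbW/K}$, so $\HP_{\Omega^k_{R_\bbW/K}}(z) \le \binom{n+1}{k}\deg(\bbW)$. For the lower bound, the key observation is the inclusion $\partial I_\bbW \subseteq I_\bbY$: if $F \in I_\bbW$ vanishes to order $m_j$ at $P_j$, then each $\partial F/\partial X_i$ vanishes to order $m_j - 1$ at $P_j$, hence lies in $I_\bbY$. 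Identifying $\Omega^k_{S/K}$ with the free $S$-module $S^{\binom{n+1}{k}}$ and writing $dF \wedge \omega = \sum_i (\partial F/\partial X_i)\,dX_i \wedge \omega$, this gives $I_\bbW\Omega^k_{S/K} + dI_\bbW\Omega^{k-1}_{S/K} \subseteq I_\bbY\Omega^k_{S/K}$. Hence $\Omega^k_{R_\bbW/K}$ surjects onto $\Omega^k_{S/K}/I_\bbY\Omega^k_{S/K} \cong (R_\bbY)^{\binom{n+1}{k}}$ and $\HP_{\Omega^k_{R_\bbW/K}}(z) \ge \binom{n+1}{k}\deg(\bbY)$.

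For the regularity index bound, I would follow the same two surjections and locate the degree beyond which each becomes an isomorphism, combining the regularity indices $r_\bbW$ and $r_\bbV$ with the intrinsic degree shifts of $\Omega^k_{S/K}$ (generated in degree~$k$) and of the differentials $dF$; the detailed bookkeeping, including why the fattening $\bbV$ enters and why the minimum of two maxima appears, is carried out in \cite[Sections~4 and 5]{KLL2018}.

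For the particular cases, when $\bbW = \bbX$ is reduced the slimming is empty, so $\deg(\bbY) = 0$ and it suffices by Corollary~\ref{CorS1_12} to show $\HP_{S/\partial I_\bbX} = 0$. At each point $P_j$, in suitable affine coordinates $I_{P_j}$ is generated by $n$ linear forms whose partial derivatives include the constant~$1$, so $\partial I_{P_j}$ is locally the unit ideal; this forces $\partial I_\bbX$ to contain a power of $\mathfrak{M}$, whence $S/\partial I_\bbX$ has finite length and $\HP = 0$. For $\bbW = (\nu+1)\bbX$ the slimming is $\nu\bbX$ with $\deg(\nu\bbX) = s\binom{\nu+n-1}{n}$, so the general lower bound already gives $\HP \ge s\binom{\nu+n-1}{n}$. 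The matching upper bound---the main obstacle---is the equimultiple calculation from \cite{KLL2018} showing that $(\partial I_{(\nu+1)\bbX})_i = (I_{\nu\bbX})_i$ for $i \gg 0$; this equimultiple equality is precisely the base case that Theorem~\ref{ThmS2_7} will extend to arbitrary multiplicities.
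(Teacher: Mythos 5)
First, a point of calibration: the paper does not prove this proposition at all --- it is recalled verbatim from \cite[Sections 4 and 5]{KLL2018}, so there is no internal proof to compare against. Measured against that, your argument for the two displayed bounds is correct and self-contained: Proposition~\ref{PropS1_10} gives the surjection $R_\bbW^{\binom{n+1}{k}}(-k)\cong \Omega^k_{S/K}/I_\bbW\Omega^k_{S/K}\twoheadrightarrow\Omega^k_{R_\bbW/K}$ for the upper bound, and the inclusion $\partial I_\bbW\subseteq I_\bbY$ (a derivative drops the vanishing order at each $P_j$ by exactly one, since $I_{P_j}$ is generated by linear forms) yields $I_\bbW\Omega^k_{S/K}+dI_\bbW\Omega^{k-1}_{S/K}\subseteq I_\bbY\Omega^k_{S/K}$ and hence a surjection onto $R_\bbY^{\binom{n+1}{k}}(-k)$ for the lower bound. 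Your localization argument for $\HP_{\Omega^{n+1}_{R_\bbX/K}}(z)=0$ is sound once phrased carefully (locally at $P_j$ the ideal $I_\bbX$ is generated by elements whose Jacobian has a unit entry, so $V(\partial I_\bbX)$ is contained in the vertex of the cone), and you correctly identify that the equality for $(\nu+1)\bbX$ does \emph{not} follow from the general bounds, which only give $s\binom{\nu+n-1}{n}\le\HP\le s\binom{\nu+n}{n}$; deferring it to the equimultiple computation of \cite{KLL2018} is the right move, since invoking Corollary~\ref{CorS2_2} instead would be circular (that corollary is deduced from this very proposition).

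The one genuine gap is the regularity index bound. Your proposed route --- locating ``the degree beyond which each surjection becomes an isomorphism'' --- cannot produce it: whenever the two displayed bounds differ, at least one of your two surjections has a kernel with positive Hilbert polynomial and so never becomes an isomorphism, and in any case the fattening $\bbV=\bbW^{(1)}$, whose regularity index $r_\bbV$ appears in the stated bound, occurs nowhere in your construction, which involves only $\bbW$ and the slimming $\bbY$. The quantity $r_\bbV$ enters through the Euler-type exact sequence $0\to I_\bbW/I_{\bbV}\to R_\bbW^{n+1}(-1)\to\Omega^1_{R_\bbW/K}\to 0$ of \cite[Theorem 1.7]{KLL2015} (the same sequence the paper uses in the proof of Proposition~\ref{PropS3_1}) and its exterior-power analogues, not through your two surjections. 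So that part of the statement remains, in your write-up as in the paper itself, a citation to \cite{KLL2018} rather than an argument; if you intend to prove it, you need to switch to the fattening sequence at that point.
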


Also, the above lower bound for $\HP_{\Omega^{n+1}_{R_\bbW/K}}(z)$
is attained for a fat point scheme whose support is contained in
a hyperplane (see \cite[Proposition~5.1]{KLL2018}) and the upper bound
for the regularity index of $\Omega^k_{R_\bbW/K}$ is sharp as well
(see \cite[Example~4.3]{KLL2018}).
Based on the isomorphism of graded $R_\bbW$-modules
$\Omega^{n+1}_{R_\bbW/K} \cong (S/\partial I_\bbW)(-n-1)$,
we obtain from Propositions~\ref{PropS1_6} and~\ref{PropS2_1}
the following consequence.

\begin{corollary}\label{CorS2_2}
Let $\bbX=P_1+\cdots+P_s$ be a set of $s$ distinct points
in~$\mathbb{P}^n$, and let $\nu \ge 1$.
There exists $i_0\in \mathbb{N}$ such that for all $i\ge i_0$
we have $(\partial I_\bbX)_i= \mathfrak{M}_i$ and
$$
(\partial I_{\bbX}^{\nu+1})_i = (\partial I_{(\nu+1)\bbX})_i
=(I_{\nu\bbX})_i = (I^\nu_\bbX)_i.
$$
\end{corollary}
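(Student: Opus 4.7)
The plan is to dispatch $(\partial I_\bbX)_i = \mathfrak{M}_i$ first and then treat the three-way equality as three independent links. For the first claim, Proposition~\ref{PropS2_1} applied to the reduced support~$\bbX$ yields $\HP_{\Omega^{n+1}_{R_\bbX/K}}(z) = 0$, which Corollary~\ref{CorS1_12} transports to $\HP_{S/\partial I_\bbX}(z) = 0$; hence $(\partial I_\bbX)_i = S_i = \mathfrak{M}_i$ for all $i \gg 0$.

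The foundation for the remaining chain is the equality $(I_{\nu\bbX})_i = (I_\bbX^\nu)_i$ for $i \gg 0$. The inclusion $I_\bbX^\nu \subseteq I_{\nu\bbX}$ is immediate, and for the reverse in large degrees I would observe that the quotient $I_{\nu\bbX}/I_\bbX^\nu$ is supported only at~$\mathfrak{M}$: after localizing at any prime $I_{P_j}$ the ideal $I_\bbX$ collapses to $I_{P_j}$ (since the remaining factors become units), so $(I_\bbX^\nu)_{I_{P_j}} = (I_{P_j}^\nu)_{I_{P_j}} = (I_{\nu\bbX})_{I_{P_j}}$, while at any other non-maximal homogeneous prime both ideals localize to the unit ideal. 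Being $\mathfrak{M}$-torsion, the quotient vanishes in large degrees. Substituting $\nu+1$ for~$\nu$ gives $(I_\bbX^{\nu+1})_i = (I_{(\nu+1)\bbX})_i$ for $i \gg 0$, and an application of Lemma~\ref{LemS1_11} then supplies the middle link $(\partial I_\bbX^{\nu+1})_i = (\partial I_{(\nu+1)\bbX})_i$.

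For the last link $(\partial I_{(\nu+1)\bbX})_i = (I_{\nu\bbX})_i$ I would use a sandwich argument. The product rule gives $\partial(I_{P_j}^{\nu+1}) \subseteq I_{P_j}^\nu$ for each~$j$, and intersecting over all~$j$ yields $\partial I_{(\nu+1)\bbX} \subseteq I_{\nu\bbX}$. On the other hand, Corollary~\ref{CorS1_12} together with Proposition~\ref{PropS2_1} gives
$$
\HP_{S/\partial I_{(\nu+1)\bbX}}(z) \;=\; \HP_{\Omega^{n+1}_{R_{(\nu+1)\bbX}/K}}(z) \;=\; s\tbinom{\nu+n-1}{n} \;=\; \HP_{S/I_{\nu\bbX}}(z),
$$
so the two Hilbert functions agree for $i \gg 0$, and combined with the inclusion this forces equality of the degree-$i$ components. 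I expect the main obstacle to be the key equality $(I_{\nu\bbX})_i = (I_\bbX^\nu)_i$; if one prefers to avoid the localization argument, an alternative is to iterate Proposition~\ref{PropS1_6}.b and then pass the resulting agreement through the $\nu$-th power via a filtration whose successive subquotients are all zero-dimensional.
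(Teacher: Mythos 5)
Your proof is correct and fills in the details in essentially the way the paper intends: the paper derives this corollary in one line from Propositions~\ref{PropS1_6} and~\ref{PropS2_1} via the isomorphism $\Omega^{n+1}_{R_\bbW/K}\cong (S/\partial I_\bbW)(-n-1)$, and your use of the Hilbert-polynomial values from Proposition~\ref{PropS2_1} together with the inclusion $\partial I_{(\nu+1)\bbX}\subseteq I_{\nu\bbX}$ (product rule plus monotonicity of $\partial$) and Lemma~\ref{LemS1_11} is exactly that sandwich. The one place you genuinely diverge is the equality $(I_{\nu\bbX})_i=(I_\bbX^\nu)_i$: the paper obtains it from the equimultiple case of Proposition~\ref{PropS1_6}.b, whose proof squeezes $I_{\nu\bbX}$ between $(I_{P_1}\cdots I_{P_s})^\nu$ and itself using separators and the fact that each $I_{P_j}$ is a complete intersection, whereas you argue that the finitely generated graded module $I_{\nu\bbX}/I_\bbX^\nu$ has all localizations at non-maximal homogeneous primes equal to zero, hence is $\mathfrak{M}$-torsion and vanishes in large degrees. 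Both are valid; your localization argument is more self-contained and avoids the separator machinery, while the paper's route reuses a proposition it needs anyway, so nothing is lost either way.
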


For a set of distinct points~$\bbX$ and a subset~$\bbY$ of~$\bbX$,
the vanishing ideals~$I_\bbX$ and $I_\bbY$ satisfy the following
relation.

\begin{lemma} \label{LemS2_4}
Let $\bbX=P_1+\cdots+P_s$ be a set of $s\ge 2$
distinct points in~$\mathbb{P}^n$, let $\bbY$
be a non-empty subset of $\bbX$, and let $k,\ell \ge 0$.
Then, for $i\gg 0$, we have
$$
(I_\bbX^k\cdot I_\bbY^\ell)_i \subseteq
(\partial(I_\bbX^{k+1}\cdot I_\bbY^\ell))_i.
$$
\end{lemma}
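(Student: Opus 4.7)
The plan is to induct on $\ell$. The base case $\ell=0$ is immediate from Corollary~\ref{CorS2_2}, which in fact gives the equality $(I_\bbX^k)_i=(\partial I_\bbX^{k+1})_i$ for $i\gg 0$. For the inductive step, fix $\ell\ge 1$ and assume the lemma for parameter $\ell-1$ and every $k'\ge 0$. Choose a finite homogeneous generating system $H_1,\dots,H_r$ of $I_\bbY$, and for $F\in(I_\bbX^kI_\bbY^\ell)_i$ write $F=\sum_jA_jH_j$ with $A_j\in(I_\bbX^kI_\bbY^{\ell-1})_{i-\deg H_j}$. Once $i$ is large enough (exceeding the bound from the induction hypothesis for $(k,\ell-1)$ shifted by $\max_j\deg H_j$), each $A_j$ admits an expression $A_j=\sum_m v_{j,m}\,\partial w_{j,m}/\partial X_{\sigma(j,m)}$ with $w_{j,m}\in I_\bbX^{k+1}I_\bbY^{\ell-1}$ and $v_{j,m}\in S$.

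Applying the Leibniz rule
$$
H_j\,\frac{\partial w_{j,m}}{\partial X_{\sigma(j,m)}}=\frac{\partial(H_jw_{j,m})}{\partial X_{\sigma(j,m)}}-w_{j,m}\,\frac{\partial H_j}{\partial X_{\sigma(j,m)}}
$$
and using $H_jw_{j,m}\in I_\bbY\cdot I_\bbX^{k+1}I_\bbY^{\ell-1}=I_\bbX^{k+1}I_\bbY^\ell$, I rewrite $F$ as $G+R$ with $G\in\partial(I_\bbX^{k+1}I_\bbY^\ell)$ and $R\in(I_\bbX^{k+1}I_\bbY^{\ell-1})_i$. To absorb the residue $R$, I invoke the induction hypothesis a second time, now with the shifted parameters $(k+1,\ell-1)$, to obtain $(I_\bbX^{k+1}I_\bbY^{\ell-1})_i\subseteq(\partial(I_\bbX^{k+2}I_\bbY^{\ell-1}))_i$ for $i\gg 0$. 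Since $\bbY\subseteq\bbX$ forces $I_\bbX\subseteq I_\bbY$, the inclusion $I_\bbX^{k+2}I_\bbY^{\ell-1}\subseteq I_\bbX^{k+1}I_\bbY^\ell$ holds, and hence $\partial(I_\bbX^{k+2}I_\bbY^{\ell-1})\subseteq\partial(I_\bbX^{k+1}I_\bbY^\ell)$. Therefore $R$, and thus $F$, lies in $\partial(I_\bbX^{k+1}I_\bbY^\ell)$, completing the induction.

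The main obstacle I foresee is recognising that one Leibniz pass does not close the argument: after the first reduction the residue lies in $I_\bbX^{k+1}I_\bbY^{\ell-1}$, a subideal that a priori need not sit inside the target $\partial(I_\bbX^{k+1}I_\bbY^\ell)$. The resolution is to apply the induction hypothesis a second time with the $k$-parameter raised by one and then to exploit the cheap ideal inclusion $I_\bbX^{k+2}I_\bbY^{\ell-1}\subseteq I_\bbX^{k+1}I_\bbY^\ell$ coming from $I_\bbX\subseteq I_\bbY$. Once this double application is in place, keeping track of the degree thresholds from the two invocations of the hypothesis so that both are valid for $i\gg 0$ is routine.
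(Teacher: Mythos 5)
Your proof is correct and follows essentially the same route as the paper: a Leibniz-rule rewriting whose residue lands in $I_\bbX^{k+1}I_\bbY^{\ell-1}$ and is then absorbed by a second application of the inductive hypothesis at the shifted parameters $(k+1,\ell-1)$ together with the inclusion $I_\bbX^{k+2}I_\bbY^{\ell-1}\subseteq I_\bbX^{k+1}I_\bbY^{\ell}$ coming from $I_\bbX\subseteq I_\bbY$. The only cosmetic difference is that you peel off a single generator $H_j$ of $I_\bbY$ and express the cofactor via the hypothesis at $(k,\ell-1)$, whereas the paper factors an element as $F\cdot G$ with $F\in I_\bbX^{k}$, $G\in I_\bbY^{\ell}$ (inducting along the diagonal $k+\ell=\nu$) and expresses $F$ via Corollary~\ref{CorS2_2}.
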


\begin{proof}
Let $\nu := k+\ell$. The claim is equivalent to proving the inclusion
\begin{align}\label{Formula1}
(I_\bbX^{\nu-j}\cdot I_\bbY^j)_i \subseteq
(\partial(I_\bbX^{\nu-j+1}\cdot I_\bbY^j))_i
\end{align}
for $i\gg0$ and $j\in \{0,1,...,\nu\}$.
We proceed by induction on $j$.
By Corollary~\ref{CorS2_2}, we have the equalities
$$
(I_\bbX^\nu)_i =(I_{\nu\bbX})_i= (\partial I_\bbX^{\nu+1})_i
$$
for $i\gg 0$, and hence (\ref{Formula1}) holds true for $j=0$.
Suppose that (\ref{Formula1}) holds true for $j-1\ge 0$, i.e.,
there exists $i_0\in \bbN$ such that for all $i\ge i_0$
we have
\begin{align}\label{Formula2}
(I_\bbX^{\nu-j+1}\cdot I_\bbY^{j-1})_i \subseteq
(\partial(I_\bbX^{\nu-j+2}\cdot I_\bbY^{j-1}))_i.
\end{align}
Using Corollary~\ref{CorS2_2}, we find $i_1\in\bbN$
such that
\begin{align*}
(I_\bbX^{\nu-j})_i =(I_{(\nu-j)\bbX})_i= (\partial I_\bbX^{\nu-j+1})_i
\quad\mbox{and}\quad
(I_\bbY^j)_i =(I_{j\bbY})_i
\end{align*}
for all $i\ge i_1$.
Further, let $r_{j\bbY}$ be the regularity index of $\HF_{j\bbY}$.
Then the ho\-mogeneous ideal $I_{j\bbY}$ can be generated by
homogeneous polynomials of degrees $\le r_{j\bbY}+1$ by
\cite[Proposition~1.1]{GM1984}.
Set $r:=  \max\{i_0,i_1,r_{j\bbY}+1\}$.
The ideal $I_\bbY^j$ can also be generated by
homogeneous polynomials of degrees~$\le r$.
Let $i\ge 2r$, and let $F\cdot G \in  (I_\bbX^{\nu-j}\cdot I_\bbY^j)_i$
with homogeneous polynomials $F \in I_\bbX^{\nu-j}$ and $G\in I_\bbY^j$.
Without loss of generality, we may assume that
$F \in (I_\bbX^{\nu-j})_{i-r}$ and $G\in (I_\bbY^j)_{r}$.
Since $i-r \ge 2r-r= r \ge i_1$,
this implies $F \in  (\partial I_\bbX^{\nu-j+1})_{i-r}$.
We may write
$$
F = \sum_{p=0}^n\sum_{q=1}^u H_{pq} \frac{\partial F_{pq}}{\partial X_p}
$$
where $F_{pq} \in (I_\bbX^{\nu-j+1})_{\deg(F_{pq})}$ and
$H_{pq} \in S_{i-r-\deg(F_{pq})+1}$
for all $0\le p\le n$ and $1\le q\le u$. Hence we have
\begin{align*}
F\cdot G &= \sum_{p=0}^n\sum_{q=1}^u H_{pq}
G\frac{\partial F_{pq}}{\partial X_p}
= \sum_{p=0}^n\sum_{q=1}^u H_{pq}
(\frac{\partial(F_{pq}G)}{\partial X_p}-
F_{pq}\frac{\partial G}{\partial X_p})\\
&= \sum_{p=0}^n\sum_{q=1}^u H_{pq}
\frac{\partial(F_{pq}G)}{\partial X_p} -
\sum_{p=0}^n\sum_{q=1}^u H_{pq}F_{pq}
\frac{\partial G}{\partial X_p}.
\end{align*}
Clearly, we have $\frac{\partial(F_{pq}G)}{\partial X_p}\in
\partial(I_\bbX^{\nu-j+1}\cdot I_\bbY^j)$ and thus
$H_{pq}\frac{\partial(F_{pq}G)}{\partial X_p}\in
(\partial(I_\bbX^{\nu-j+1}\cdot I_\bbY^j))_i$.
Moreover, from the inclusion $\partial I_\bbY^j \subseteq I_\bbY^{j-1}$
we deduce $F_{pq}\frac{\partial G}{\partial X_p} \in
I_\bbX^{\nu-j+1}\cdot I_\bbY^{j-1}$.
Also, the inclusion $I_{\bbX}\subseteq I_{\bbY}$ yields
\begin{align}\label{Formula3}
I_\bbX^{\nu-j+2}\cdot I_\bbY^{j-1} \subseteq
I_\bbX^{\nu-j+1}\cdot I_\bbY^{j}.
\end{align}
Hence, from (\ref{Formula2}) and (\ref{Formula3}) we get
$$
H_{pq}F_{pq}\frac{\partial G}{\partial X_p} \in
(I_\bbX^{\nu-j+1}\cdot I_\bbY^{j-1})_i \subseteq
(\partial(I_\bbX^{\nu-j+2}\cdot I_\bbY^{j-1}))_i
\subseteq (\partial(I_\bbX^{\nu-j+1}\cdot I_\bbY^{j}))_i
$$
for all $p,q$. This implies
$F\cdot G\in (\partial(I_\bbX^{\nu-j+1}\cdot I_\bbY^j))_i$.
Since $(I_\bbX^{\nu-j}\cdot I_\bbY^j)_i$ is a $K$-vector space
generated by elements of the form $F\cdot G$, we obtain
the desired inclusion
$$
(I_\bbX^{\nu-j}\cdot I_\bbY^j)_i \subseteq
(\partial(I_\bbX^{\nu-j+1}\cdot I_\bbY^j))_i.
$$
\qed\end{proof}

Notice that the assumtion that~$\bbY$ is a subset of~$\bbX$
is essential for this lemma to hold, as the following
example shows.

\begin{example}
Let $K=\bbQ$ and let $\bbX,\bbY$ be two complete intersections
in $\bbP^2$ given by $\bbX= P_1+P_2+P_3+P_4$ and $\bbY=P_1+P_2+P_5+P_6$,
where $P_1=(1:0:0)$, $P_2=(1:0:1)$, $P_3=(1:1:0)$, $P_4=(1:1:1)$,
$P_5=(1:2:0)$ and $P_6=(1:2:1)$. Then, in $S=K[X_0,X_1,X_2]$, we have
$$
I_\bbX =\langle X_0X_1 -X_1^2, X_0X_2 -X_2^2 \rangle
\quad \mbox{and}\quad
I_\bbY =\langle 2X_0X_1 -X_1^2, X_0X_2 -X_2^2 \rangle.
$$
For $k=\ell=1$, we see that
$$
\HF_{S/(I_\bbX\cdot I_\bbY)}:\ 1\ 3\ 6\ 10\ 11\ 10\ 10\cdots
\quad\mbox{and}\quad
\HF_{S/\partial(I_\bbX^2\cdot I_\bbY)}:\ 1\ 3\ 6\ 10\ 15\ 8\ 8\cdots.
$$
Since  $\HF_{S/(I_\bbX\cdot I_\bbY)}(4)=11<15=
\HF_{S/\partial(I_\bbX^2\cdot I_\bbY)}(4)$, we see that 
$I_\bbX\cdot I_\bbY \nsubseteq \partial(I_\bbX^2\cdot I_\bbY)$.
Thus the formula of the lemma does not hold in general, when~$\bbY$
is not a subset of~$\bbX$.
\end{example}

The preceding lemma can be generalized as follows.

\begin{proposition}\label{PropS2_5}
Let $t\ge 2$, let $\nu_1,\dots,\nu_t \ge 1$,
and let $\bbY_1 \supsetneq \bbY_2 \supsetneq \cdots \supsetneq \bbY_t$
be a descending chain of finite sets of distinct points in $\bbP^n$.
Then, for $i\gg 0$ we have
$$
(I_{\bbY_1}^{\nu_1}\cdot I_{\bbY_2}^{\nu_2}\cdots I_{\bbY_t}^{\nu_t})_i
\subseteq (\partial(I_{\bbY_1}^{\nu_1+1}\cdot I_{\bbY_2}^{\nu_2}
\cdots I_{\bbY_t}^{\nu_t}))_i.
$$
\end{proposition}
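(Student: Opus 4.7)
The plan is to adapt the proof of Lemma~\ref{LemS2_4}, replacing the single-parameter induction on the exponent of $I_\bbY$ with an induction on the ``non-leading'' total exponent $\sigma := \nu_2 + \cdots + \nu_t$. To make the induction close cleanly, I would prove the slightly more general statement in which $\nu_2,\dots,\nu_t$ are permitted to be zero, with the convention $I^0 = S$. The base case $\sigma = 0$ then reduces to $(I_{\bbY_1}^{\nu_1})_i \subseteq (\partial I_{\bbY_1}^{\nu_1+1})_i$ for $i\gg 0$, which is exactly Corollary~\ref{CorS2_2}.

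For the inductive step with $\sigma \ge 1$, I would mimic the main computation of the lemma almost verbatim. Choose a uniform degree threshold $r$ large enough that Corollary~\ref{CorS2_2} applies to $I_{\bbY_1}^{\nu_1}$ and that each ideal $I_{\bbY_k}^{\nu_k}$ for $k\ge 2$ is generated in degrees $\le r$. For $i \ge 2r$ and a generator $F\cdot G \in (I_{\bbY_1}^{\nu_1}I_{\bbY_2}^{\nu_2}\cdots I_{\bbY_t}^{\nu_t})_i$ with $F \in (I_{\bbY_1}^{\nu_1})_{i-r}$ and $G \in (I_{\bbY_2}^{\nu_2}\cdots I_{\bbY_t}^{\nu_t})_r$, Corollary~\ref{CorS2_2} lets us write $F = \sum H_{pq}\,\partial F_{pq}/\partial X_p$ with $F_{pq}\in I_{\bbY_1}^{\nu_1+1}$. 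Splitting as in the lemma gives
$$
F\cdot G \;=\; \sum H_{pq}\,\frac{\partial(F_{pq}G)}{\partial X_p} \;-\; \sum H_{pq}F_{pq}\,\frac{\partial G}{\partial X_p}.
$$
The first sum manifestly lies in $\partial(I_{\bbY_1}^{\nu_1+1}I_{\bbY_2}^{\nu_2}\cdots I_{\bbY_t}^{\nu_t})$. For the second sum, the Leibniz rule applied to the factors of $G$ yields $\partial G/\partial X_p \in \sum_{k\ge 2,\,\nu_k\ge 1} I_{\bbY_2}^{\nu_2}\cdots I_{\bbY_k}^{\nu_k-1}\cdots I_{\bbY_t}^{\nu_t}$, so each of its summands, multiplied by $H_{pq}F_{pq}$, lies in $I_{\bbY_1}^{\nu_1+1}\cdots I_{\bbY_k}^{\nu_k-1}\cdots I_{\bbY_t}^{\nu_t}$ for some $k \ge 2$ with $\nu_k \ge 1$.

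The tuple $(\nu_1+1,\nu_2,\dots,\nu_k-1,\dots,\nu_t)$ has strictly smaller $\sigma$, so the inductive hypothesis places each such summand in $\partial(I_{\bbY_1}^{\nu_1+2}\cdots I_{\bbY_k}^{\nu_k-1}\cdots I_{\bbY_t}^{\nu_t})$ for $i\gg 0$. The chain inclusion $I_{\bbY_1}\subseteq I_{\bbY_k}$ (coming from $\bbY_k\subseteq\bbY_1$) provides the absorption $I_{\bbY_1}^{\nu_1+2}I_{\bbY_k}^{\nu_k-1}\subseteq I_{\bbY_1}^{\nu_1+1}I_{\bbY_k}^{\nu_k}$, which collapses the exponents back to the target and gives containment in $\partial(I_{\bbY_1}^{\nu_1+1}I_{\bbY_2}^{\nu_2}\cdots I_{\bbY_t}^{\nu_t})$. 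Summing over $k$ completes the inductive step.

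The main obstacle I anticipate is the degree bookkeeping. The inductive step applies the hypothesis to finitely many distinct tuples (one for each index $k \ge 2$ with $\nu_k \ge 1$), each coming with its own threshold degree; the choice of $r$ and of the overall cut-off $i_0$ must be uniform over all of them as well as over the threshold required by Corollary~\ref{CorS2_2} and by the generating degrees of the $I_{\bbY_k}^{\nu_k}$. This parallels the bookkeeping at the end of the proof of Lemma~\ref{LemS2_4}, and taking $r$ to be the maximum of those finitely many thresholds (plus the appropriate regularity indices) and $i \ge 2r$ will suffice.
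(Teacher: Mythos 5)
Your proof is correct, and it organizes the induction differently from the paper. The paper proves Proposition~\ref{PropS2_5} by an outer induction on the number $t$ of sets: the base case $t=2$ is Lemma~\ref{LemS2_4}, and the step from $t-1$ to $t$ is carried out by iterating a separate helper (Lemma~\ref{LemS2_6}), which appends one factor of $I_{\bbY_t}$ at a time and therefore requires the inclusion as a hypothesis for two consecutive leading exponents $\nu_1$ and $\nu_1+1$. Your single induction on $\sigma=\nu_2+\cdots+\nu_t$ (with zero exponents permitted) replaces this nested scheme; it is the direct generalization of the paper's own proof of Lemma~\ref{LemS2_4}, whose induction on $j$ at fixed $\nu=k+\ell$ is precisely your induction on $\sigma$ in the case $t=2$. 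The core mechanism is identical in both routes: write $F\in(I_{\bbY_1}^{\nu_1})_{i-r}$ as $\sum H_{pq}\,\partial F_{pq}/\partial X_p$ via Corollary~\ref{CorS2_2}, integrate by parts, and absorb the error terms using $I_{\bbY_1}\subseteq I_{\bbY_k}$ together with an already-established instance of the inclusion with the leading exponent raised by one. What your version buys is the elimination of Lemma~\ref{LemS2_6} and its two-hypothesis bookkeeping; what it costs is that the error term $F_{pq}\,\partial G/\partial X_p$ now spreads over up to $t-1$ exponent tuples (one for each factor the Leibniz rule can hit), so the uniform cut-off must dominate finitely many induction-hypothesis thresholds rather than one --- which you correctly flag and which is unproblematic. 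One small precision worth adding: for the reduction to $G\in(I_{\bbY_2}^{\nu_2}\cdots I_{\bbY_t}^{\nu_t})_r$ you need the \emph{product} ideal to be generated in degrees $\le r$, so $r$ should dominate the sum of the generating degrees of the factors $I_{\bbY_k}^{\nu_k}$, not merely each one individually; since $r$ only needs to be sufficiently large, this is harmless.
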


In the proof of this proposition we use the following lemma.

\begin{lemma}\label{LemS2_6}
Let $t\ge 2$, let $\nu_1,\dots,\nu_t \ge 1$,
and let $\bbY_1 \supsetneq \bbY_2 \supsetneq \cdots \supsetneq \bbY_t$
be a descending chain of finite sets of distinct points in $\bbP^n$. If
$$
(I_{\bbY_1}^{\nu_1+1}\cdot I_{\bbY_2}^{\nu_2}\cdots I_{\bbY_t}^{\nu_t})_i
\subseteq (\partial(I_{\bbY_1}^{\nu_1+2}\cdot I_{\bbY_2}^{\nu_2}
\cdots I_{\bbY_t}^{\nu_t}))_i
$$
and
$$
(I_{\bbY_1}^{\nu_1}\cdot I_{\bbY_2}^{\nu_2}\cdots I_{\bbY_t}^{\nu_t})_i
\subseteq (\partial(I_{\bbY_1}^{\nu_1+1}\cdot I_{\bbY_2}^{\nu_2}
\cdots I_{\bbY_t}^{\nu_t}))_i
$$
for $i\gg 0$, then
$$
(I_{\bbY_1}^{\nu_1}\cdot I_{\bbY_2}^{\nu_2}\cdots I_{\bbY_t}^{\nu_t+1})_i
\subseteq (\partial(I_{\bbY_1}^{\nu_1+1}\cdot I_{\bbY_2}^{\nu_2}
\cdots I_{\bbY_t}^{\nu_t+1}))_i
$$
for $i\gg 0$.
\end{lemma}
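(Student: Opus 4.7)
The plan is to imitate the Leibniz-rule manipulation used in the proof of Lemma~\ref{LemS2_4}, splitting off a factor in~$I_{\bbY_t}$ and invoking the two given hypotheses in turn. Fix integers $N_1, N_2 \in \bbN$ above which the two assumed inclusions hold, and let $r$ be an integer such that $I_{\bbY_t}$ can be generated by homogeneous polynomials of degree at most~$r$. I would then show the claim for every $i \ge N := \max\{N_1,\, N_2 + r\}$. Since $(I_{\bbY_1}^{\nu_1}\cdot I_{\bbY_2}^{\nu_2}\cdots I_{\bbY_t}^{\nu_t+1})_i$ is $K$-spanned by products $F\cdot G$ with $F \in I_{\bbY_1}^{\nu_1}\cdots I_{\bbY_t}^{\nu_t}$ homogeneous of degree $\ge i-r$ and $G \in I_{\bbY_t}$ homogeneous of degree $\le r$, it is enough to handle a single such product.

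Applying the second given inclusion to $F$ (whose degree is at least $N_2$) yields a representation $F = \sum_{p,q} H_{pq}\,\partial F_{pq}/\partial X_p$ with $F_{pq} \in I_{\bbY_1}^{\nu_1+1}\cdot I_{\bbY_2}^{\nu_2}\cdots I_{\bbY_t}^{\nu_t}$ and $H_{pq}\in S$ homogeneous. Multiplying by $G$ and using the product rule exactly as in Lemma~\ref{LemS2_4} gives
$$
F\cdot G \;=\; \sum_{p,q} H_{pq}\,\frac{\partial(F_{pq}\,G)}{\partial X_p}
\;-\; \sum_{p,q} H_{pq}\,F_{pq}\,\frac{\partial G}{\partial X_p}.
$$
The first sum lies directly in $(\partial(I_{\bbY_1}^{\nu_1+1}\cdot I_{\bbY_2}^{\nu_2}\cdots I_{\bbY_t}^{\nu_t+1}))_i$, because $F_{pq}\,G \in I_{\bbY_1}^{\nu_1+1}\cdot I_{\bbY_2}^{\nu_2}\cdots I_{\bbY_t}^{\nu_t+1}$.

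For the second (``bad'') sum, each summand $H_{pq}F_{pq}\,\partial G/\partial X_p$ lies in $(I_{\bbY_1}^{\nu_1+1}\cdot I_{\bbY_2}^{\nu_2}\cdots I_{\bbY_t}^{\nu_t})_i$, and since $i \ge N_1$ the first assumed inclusion places it in $(\partial(I_{\bbY_1}^{\nu_1+2}\cdot I_{\bbY_2}^{\nu_2}\cdots I_{\bbY_t}^{\nu_t}))_i$. The decisive closing step---and the only place where the chain $\bbY_1 \supsetneq \cdots \supsetneq \bbY_t$ is actually used---is the inclusion $I_{\bbY_1} \subseteq I_{\bbY_t}$, which yields
$$
I_{\bbY_1}^{\nu_1+2}\cdot I_{\bbY_2}^{\nu_2}\cdots I_{\bbY_t}^{\nu_t}
\;=\; I_{\bbY_1}^{\nu_1+1}\cdot I_{\bbY_2}^{\nu_2}\cdots I_{\bbY_{t-1}}^{\nu_{t-1}}\cdot(I_{\bbY_1}\cdot I_{\bbY_t}^{\nu_t})
\;\subseteq\; I_{\bbY_1}^{\nu_1+1}\cdot I_{\bbY_2}^{\nu_2}\cdots I_{\bbY_t}^{\nu_t+1},
$$
and consequently the ``bad'' sum also lies in $(\partial(I_{\bbY_1}^{\nu_1+1}\cdot I_{\bbY_2}^{\nu_2}\cdots I_{\bbY_t}^{\nu_t+1}))_i$, completing the argument.

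The main obstacle, as in Lemma~\ref{LemS2_4}, is the degree bookkeeping: one must ensure that after the second hypothesis is used to rewrite $F$ as a derivative expression, the resulting ``bad'' terms still have total degree above the threshold~$N_1$ needed for the first hypothesis. Peeling off the factor $G \in I_{\bbY_t}$ of bounded degree $\le r$ before any derivatives are taken, and choosing $i \ge N_2 + r$, arranges this cleanly; the conceptual heart of the proof is the trade-off at the very last step in which an extra factor of $I_{\bbY_1}$ is absorbed into $I_{\bbY_t}^{\nu_t}$ to produce $I_{\bbY_t}^{\nu_t+1}$.
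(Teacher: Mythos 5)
Your proposal is correct and follows essentially the same route as the paper's proof: peel off a bounded-degree factor of $I_{\bbY_t}$, apply the second hypothesis to the remaining factor, use the Leibniz rule, absorb the good terms directly, and dispose of the bad terms via the first hypothesis together with the inclusion $I_{\bbY_1}^{\nu_1+2}I_{\bbY_2}^{\nu_2}\cdots I_{\bbY_t}^{\nu_t}\subseteq I_{\bbY_1}^{\nu_1+1}I_{\bbY_2}^{\nu_2}\cdots I_{\bbY_t}^{\nu_t+1}$ coming from $I_{\bbY_1}\subseteq I_{\bbY_t}$. The only differences are notational and in the degree bookkeeping (the paper fixes the $I_{\bbY_t}$-factor to have degree exactly $i_0\ge r_{\bbY_t}+1$ and takes $i\ge 2i_0$, while you bound its degree by $r$ and take $i\ge N_2+r$), both of which are equally valid.
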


\begin{proof}
Note that the ideal $I_{\bbY_t}$ can be generated by homogeneous
polynomials of degree $\le r_{\bbY_t}+1$, where $r_{\bbY_t}$
is the regularity index of $\HF_{\bbY_t}$.
Set $J := I_{\bbY_1}^{\nu_1}\cdot I_{\bbY_2}^{\nu_2}\cdots I_{\bbY_t}^{\nu_t}$.
By assumption, there exists an integer $i_0 \ge r_{\bbY_t}+1$ such that
$$
(I_{\bbY_1}\cdot J)_i
\subseteq (\partial(I_{\bbY_1}^{2}\cdot J))_i
\quad\mbox{ and }\quad
J_i \subseteq (\partial(I_{\bbY_1}\cdot J))_i
$$
for every $i\ge i_0$. Now we let $i\ge 2i_0$ and consider
$F \in (J\cdot I_{\bbY_t})_i$ of the form
$F=G\cdot H$ with homogeneous polynomials $G \in J$
and $H\in I_{\bbY_t}$. By the choice of~$i_0$,
we may assume $H\in  (I_{\bbY_t})_{i_0}$ and $G\in J_{i-i_0}$.
Since $i-i_0\ge i_0$, we get
$G \in J_{i-i_0} \subseteq (\partial(I_{\bbY_1}\cdot J))_{i-i_0}$.
There are homogeneous polynomials
$G_{01},G_{02},\dots,G_{nm} \in I_{\bbY_1}\cdot J$
and $H_{01},H_{02},\dots,H_{nm} \in S$ such that
$$
G = \sum_{k=0}^n\sum_{\ell=1}^m H_{k\ell}\frac{\partial G_{k\ell}}{\partial X_k}
$$
For any $k,\ell\ge 0$, we observe that
$\frac{\partial (HG_{k\ell})}{\partial X_k} \in
\partial(I_{\bbY_1}\cdot J\cdot I_{\bbY_t})$.
Moreover, we have
$$
H_{k\ell}G_{k\ell}\frac{\partial H}{\partial X_k}\in
(I_{\bbY_1}\cdot J)_i
\subseteq (\partial(I_{\bbY_1}^{2}\cdot J))_i
\subseteq (\partial(I_{\bbY_1}\cdot J \cdot I_{\bbY_t}))_i.
$$
Here the last inclusion follows from the fact that
$I_{\bbY_1}\subseteq I_{\bbY_t}$.
So, we obtain
$$
F = \sum_{k=0}^n\sum_{\ell=1}^m
H_{k\ell}H\frac{\partial G_{k\ell}}{\partial X_k}
= \sum_{k=0}^n\sum_{\ell=1}^m H_{k\ell}
(\frac{\partial (HG_{k\ell})}{\partial X_k}-
G_{k\ell}\frac{\partial H}{\partial X_k}) \in
(\partial(I_{\bbY_1}\cdot J \cdot I_{\bbY_t}))_i.
$$
Since $(J\cdot I_{\bbY_t})_i$ is a $K$-vector space
generated by such elements $F$, the inclusion
$(J\cdot I_{\bbY_t})_i\subseteq
(\partial(I_{\bbY_1}\cdot J \cdot I_{\bbY_t}))_i$
is completely proved.
\end{proof}

\begin{proof}[of Proposition~\ref{PropS2_5}]
Let us prove the claim by induction on $t$.
For $t=2$, the claim follows from Lemma~\ref{LemS2_4}.
Now we consider the case $t>2$ and assume that
the claim holds true for $t-1$. We let
$\nu_1,\dots,\nu_t \ge 0$ and set
$J := I_{\bbY_2}^{\nu_2}\cdots I_{\bbY_{t-1}}^{\nu_{t-1}}$.
By the induction hypothesis, $(I_{\bbY_1}^k \cdot J)_i \subseteq
(\partial(I_{\bbY_1}^{k+1}\cdot J))_i$ holds true for $i\gg 0$
and for all $k\ge 0$. Thus Lemma~\ref{LemS2_6} yields that
$(I_{\bbY_1}^k \cdot J\cdot I_{\bbY_t})_i \subseteq
(\partial(I_{\bbY_1}^{k+1}\cdot J\cdot I_{\bbY_t}))_i$
holds true for $i\gg 0$ and for all $k\ge 0$.
By applying Lemma~\ref{LemS2_6} again, and by induction on the
power of the ideal $I_{\bbY_t}$, we obtain the inclusion
$(I_{\bbY_1}^{\nu_1}\cdot J\cdot I_{\bbY_t}^{\nu_t})_i
\subseteq (\partial(I_{\bbY_1}^{\nu_1+1}\cdot J
\cdot I_{\bbY_t}^{\nu_t}))_i$ for $i\gg 0$, and the claim follows.
\qed\end{proof}

Now we are ready to state and prove a formula for the
Hilbert polynomial of the module of K{\"a}hler differential
$(n+1)$-forms for an arbitrary fat point scheme~$\bbW$ in~$\bbP^n$.
This gives an affirmative answer to the conjecture made
in~\cite[Conjecture~5.7]{KLL2018}.

\begin{theorem}\label{ThmS2_7}
Let $\bbW=m_1P_1+\cdots+m_sP_s$ be a fat point scheme
in~$\mathbb{P}^n\!$, and let~$\bbY$ be the subscheme
$\bbY =(m_1-1)P_1+\cdots+(m_s-1)P_s$ of $\bbW$.
Then the Hilbert polynomial of $\Omega^{n+1}_{R_\bbW /K}$
is given by
$$
\HP_{\Omega^{n+1}_{R_\bbW /K}}(z) = \HP_{\bbY}(z)
= \sum_{j=1}^s\tbinom{m_j+n-2}{n}.
$$
\end{theorem}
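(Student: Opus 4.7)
My plan is to invoke Corollary~\ref{CorS1_12} to restate the claim as the polynomial identity $\HP_{S/\partial I_\bbW}(z) = \HP_{S/I_\bbY}(z)$. The lower bound $\HP_{\Omega^{n+1}_{R_\bbW/K}}(z) \ge \sum_{j=1}^s \binom{m_j+n-2}{n} = \HP_{S/I_\bbY}(z)$ is already supplied by Proposition~\ref{PropS2_1}. Moreover, since differentiating any element of $I_{P_j}^{m_j}$ reduces its order of vanishing at $P_j$ by one (we are in characteristic zero), the easy inclusion $\partial I_\bbW \subseteq \bigcap_{j=1}^{s} I_{P_j}^{m_j-1} = I_\bbY$ holds, giving $(\partial I_\bbW)_i \subseteq (I_\bbY)_i$ for every $i$. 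The missing ingredient is therefore the reverse containment in high degree,
$$
(I_\bbY)_i \subseteq (\partial I_\bbW)_i \qquad\text{for } i \gg 0,
$$
which combined with the opposite inclusion will force equality of the two Hilbert polynomials.

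My strategy for the key inclusion is a product decomposition into layers, followed by Proposition~\ref{PropS2_5}. Introduce the \emph{layer sets} $\mathbb{L}_k := \{P_j : m_j \ge k\}$ for $k = 1, \dots, m_1$; these form a descending chain $\bbX = \mathbb{L}_1 \supseteq \mathbb{L}_2 \supseteq \cdots \supseteq \mathbb{L}_{m_1}$. Iterating Proposition~\ref{PropS1_6}, first applied to $\bbW$, then to its slimming, and so on a total of $m_1$ times, I would obtain for $i \gg 0$ the decompositions
$$
(I_\bbW)_i = (I_{\mathbb{L}_1} I_{\mathbb{L}_2} \cdots I_{\mathbb{L}_{m_1}})_i
\qquad\text{and}\qquad
(I_\bbY)_i = (I_{\mathbb{L}_2} I_{\mathbb{L}_3} \cdots I_{\mathbb{L}_{m_1}})_i.
$$
Lemma~\ref{LemS1_11} then upgrades the first equality to $(\partial I_\bbW)_i = (\partial(I_{\mathbb{L}_1} I_{\mathbb{L}_2} \cdots I_{\mathbb{L}_{m_1}}))_i$ for $i \gg 0$, reducing the problem to
$$
(I_{\mathbb{L}_2} \cdots I_{\mathbb{L}_{m_1}})_i \;\subseteq\; \bigl(\partial(I_{\mathbb{L}_1} I_{\mathbb{L}_2} \cdots I_{\mathbb{L}_{m_1}})\bigr)_i \qquad\text{for } i \gg 0.
$$

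To close the argument, I would collapse the chain $\mathbb{L}_1 \supseteq \cdots \supseteq \mathbb{L}_{m_1}$ to its distinct terms $\bbY_1 \supsetneq \bbY_2 \supsetneq \cdots \supsetneq \bbY_t$ with multiplicities $\nu_1, \dots, \nu_t \ge 1$ (summing to $m_1$). The reduction above then reads
$$
(I_{\bbY_1}^{\nu_1 - 1} I_{\bbY_2}^{\nu_2} \cdots I_{\bbY_t}^{\nu_t})_i \;\subseteq\; \bigl(\partial(I_{\bbY_1}^{\nu_1} I_{\bbY_2}^{\nu_2} \cdots I_{\bbY_t}^{\nu_t})\bigr)_i \qquad\text{for } i \gg 0,
$$
which is precisely the conclusion of Proposition~\ref{PropS2_5}. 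In the equimultiple case $t = 1$ (so $\bbW = m\bbX$), one invokes Corollary~\ref{CorS2_2} instead, which even yields equality in sufficiently large degrees.

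The principal obstacle is the case $\nu_1 - 1 = 0$, occurring precisely when $\min_j m_j = 1$, for which the leading exponent on the left vanishes. Proposition~\ref{PropS2_5} is stated only for positive exponents, but a careful re-reading of its inductive proof shows that the argument is formal in the leading exponent: the base case $t = 2$ with $\nu_1 = 0$ coincides with Lemma~\ref{LemS2_4} at $k = 0$, and the inductive step via Lemma~\ref{LemS2_6} nowhere uses $\nu_1 \ge 1$ (the only ideal-theoretic input it needs from the descending chain is the containment $I_{\bbY_1} \subseteq I_{\bbY_t}$). A secondary, routine bookkeeping issue is the stacking of the degree thresholds across the $m_1$ iterations of Proposition~\ref{PropS1_6} used to establish the product decomposition, but this merely pushes the ``sufficiently large'' threshold further and causes no real difficulty.
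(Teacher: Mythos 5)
Your proposal is correct and follows essentially the same route as the paper: the lower bound from Proposition~\ref{PropS2_1}, the product decomposition of $(I_\bbW)_i$ and $(I_\bbY)_i$ along a descending chain of support sets (your layer sets $\mathbb{L}_k$ collapse to exactly the paper's chain $\bbY_1 \supsetneq \cdots \supsetneq \bbY_t$), Lemma~\ref{LemS1_11} to pass to the Jacobian ideals, and Proposition~\ref{PropS2_5} for the key inclusion $(I_\bbY)_i \subseteq (\partial I_\bbW)_i$ in large degrees, with the equimultiple situation handled separately. Your remark on the edge case of a vanishing leading exponent (when $\min_j m_j = 1$) is well taken --- the paper invokes Proposition~\ref{PropS2_5} with first exponent $\nu_1 - 1$ without comment --- and your fix (the inductive proof goes through for exponent $0$, the base case reducing to Lemma~\ref{LemS2_4} with $k=0$) is the right one.
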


\begin{proof}
Note that if $\bbW$ is an equimultiple fat point scheme
in $\mathbb{P}^n$, i.e., if $m_1=\cdots=m_s=\nu$ for some $\nu\ge 1$,
the claim was proved in \cite[Corollary~5.3]{KLL2018}.
Now we prove the claim for the general case. By reordering the 
indices of the points, we may write $\bbW$ as
$$
\bbW = \nu_1\bbX_1+\nu_2\bbX_2+\cdots+ \nu_t\bbX_t,
$$
where $\bbX_1,\dots,\bbX_t$ are disjoint subsets of points in $\bbX$
with $\bbX=\bbX_1+\cdots+\bbX_t$, and where $1\le \nu_1<\nu_2<\cdots<\nu_t$
for some $t\ge 2$.
For $k=1,\dots,t$, we set $\bbY_k = \bbX_k + \cdots + \bbX_t$.
Then we get a descending chain
$\bbX=\bbY_1\supsetneq \bbY_2 \supsetneq\cdots \supsetneq \bbY_t$
of finite sets of distinct points in~$\bbP^n$ such that
$$
\bbW = \nu_1 \bbY_1+ \sum_{k=2}^t(\nu_k-\nu_{k-1})\bbY_k.
$$
Also, it is obviously true that
$\bbY = (\nu_1-1)\bbY_1+ \sum_{k=2}^t(\nu_k-\nu_{k-1})\bbY_k$.
An application of Proposition~\ref{PropS1_6} shows that,
for $i\gg 0$, we have the equalities
\begin{align}\label{Formula4}
(I_\bbW)_i =
(I_{\bbY_1}^{\nu_1}\cdot I_{\bbY_2}^{\nu_2-\nu_1}
\cdots I_{\bbY_t}^{\nu_t-\nu_{t-1}})_i
\mbox{\ and\ }
(I_\bbY)_i =
(I_{\bbY_1}^{\nu_1-1}\cdot I_{\bbY_2}^{\nu_2-\nu_1}
\cdots I_{\bbY_t}^{\nu_t-\nu_{t-1}})_i
\end{align}
It follows from Lemma~\ref{LemS1_11} that
\begin{align} \label{Formula5}
(\partial I_\bbW)_i \;=\;
(\partial(I_{\bbY_1}^{\nu_1}\cdot I_{\bbY_2}^{\nu_2-\nu_1}
\cdots I_{\bbY_t}^{\nu_t-\nu_{t-1}}))_i
\end{align}
for $i\gg 0$.
Furthermore, Proposition~\ref{PropS2_5} implies the inclusion
\begin{align} \label{Formula6}
(I_{\bbY_1}^{\nu_1-1}\cdot I_{\bbY_2}^{\nu_2-\nu_1}
\cdots I_{\bbY_t}^{\nu_t-\nu_{t-1}})_i \subseteq
(\partial(I_{\bbY_1}^{\nu_1}\cdot I_{\bbY_2}^{\nu_2-\nu_1}
\cdots I_{\bbY_t}^{\nu_t-\nu_{t-1}}))_i
\end{align}
for $i\gg 0$.
From (\ref{Formula4}), (\ref{Formula5}) and (\ref{Formula6}) we get
$$
(I_\bbY)_i=(I_{\bbY_1}^{\nu_1-1}\cdot I_{\bbY_2}^{\nu_2-\nu_1}
\cdots I_{\bbY_t}^{\nu_t-\nu_{t-1}})_i \subseteq
(\partial I_\bbW)_i
$$
for $i\gg 0$.
Since $\Omega^{n+1}_{R_\bbW /K} \cong (S/\partial I_\bbW)(-n-1)$,
the Hilbert polynomial of $\Omega^{n+1}_{R_\bbW /K}$ satisfies
$$
\HP_{\Omega^{n+1}_{R_\bbW /K}}(z) = \HP_{S/\partial I_\bbW}(z)
\le \HP_{S/I_\bbY}(z)=\HP_\bbY(z).
$$
Moreover, Proposition~\ref{PropS2_1} yields that
$$
\HP_{\Omega^{n+1}_{R_\bbW /K}}(z)
\ge \sum_{j=1}^s\tbinom{m_j+n-2}{n} =\HP_\bbY(z).
$$
Therefore the desired equality for
$\HP_{\Omega^{n+1}_{R_\bbW /K}}(z)$ follows.
\qed\end{proof}

%%%%%%%%%%%%%%%%%%%%%%%%%%%%
%
%  Section 4
%
%%%%%%%%%%%%%%%%%%%%%%%%%%%%

\bigbreak
\section{K\"ahler Differential Modules of Fat Point Schemes
in $\mathbb{P}^2$}\label{sec4}

In this section we apply the previous results to examine
the case of fat point schemes in the projective plane $\bbP^2$.
When a fat point scheme~$\bbW$ of~$\bbP^2$ is equimultiple
and is supported on a non-singular conic, the Hilbert function of
the module of K\"ahler differential $k$-forms of $\bbW$ was
computed in~\cite[Section~6]{KLL2018}). However, in the general case
no such detailed information is available.
In this section we use our preceding results to supplement our 
knowledge about $\Omega^k_{R_\bbW /K}$ with some new information.

\begin{proposition} \label{PropS3_1}
Let $\bbW = m_1P_1+\cdots+m_sP_s$ be a fat point scheme
in $\mathbb{P}^2$. The Hilbert polynomials of the modules
of K\"ahler differentials of $\bbW$ are given by
\begin{align*}
\HP_{\Omega^1_{R_\bbW /K}}(z) &=
\tsum{j=1}{s} \;\tfrac{1}{2}\, (3m_j-2)(m_j+1),\\
\HP_{\Omega^2_{R_\bbW/K}}(z) &=
\tsum{j=1}{s} \;\tfrac{1}{2}\, (3m_j+2)(m_j-1),\\
\HP_{\Omega^3_{R_\bbW/K}}(z) &=
\tsum{j=1}{s} \;\tfrac{1}{2} \, m_j(m_j-1).
\end{align*}
\end{proposition}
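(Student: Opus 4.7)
The plan is to handle the three values $k=1,2,3$ separately, each via a short or four-term exact sequence.

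The case $k=3$ is immediate from Theorem~\ref{ThmS2_7} applied with $n=2$, which yields $\HP_{\Omega^3_{R_\bbW/K}}(z) = \sum_{j=1}^s \tbinom{m_j}{2}=\sum_{j=1}^s\tfrac{m_j(m_j-1)}{2}$.

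For $k=1$, I would derive from the presentation in Proposition~\ref{PropS1_10} the short exact sequence of graded $R_\bbW$-modules
$$
0 \;\longrightarrow\; I_\bbW / I_{\bbW^{(1)}} \;\stackrel{d}{\longrightarrow}\; R_\bbW(-1)^{3} \;\longrightarrow\; \Omega^1_{R_\bbW/K} \;\longrightarrow\; 0,
$$
where $\bbW^{(1)}=(m_1+1)P_1+\cdots+(m_s+1)P_s$ is the first fattening and $d$ sends the class of $F$ to $(\overline{\partial F/\partial X_0},\overline{\partial F/\partial X_1},\overline{\partial F/\partial X_2})$. Well-definedness holds because $F\in \bigcap_j I_{P_j}^{m_j+1}$ forces each partial derivative to lie in $\bigcap_j I_{P_j}^{m_j}=I_\bbW$, and $R_\bbW$-linearity follows from the Leibniz rule $d(GF)\equiv G\,dF\pmod{I_\bbW\Omega^1_{S/K}}$ whenever $F\in I_\bbW$. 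The substantive point is injectivity of $d$, which amounts to the local assertion: for each $j$, if $F\in I_{P_j}^{m_j}$ and every $\partial F/\partial X_i \in I_{P_j}^{m_j}$, then $F \in I_{P_j}^{m_j+1}$. Dehomogenizing at $P_j$ and isolating the lowest-degree homogeneous component of the dehomogenization, the hypothesis forces that component's partial derivatives to vanish, so in characteristic zero the component itself vanishes. Computing $\HP_{I_\bbW/I_{\bbW^{(1)}}}(z) = \deg\bbW^{(1)} - \deg\bbW = \sum_j(m_j+1)$ and subtracting from $3\deg\bbW=3\sum_j\tbinom{m_j+1}{2}$ gives the stated formula after simplification.

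For $k=2$, I would invoke Proposition~\ref{PropS3_3}, which establishes that the canonical four-term complex exhibited in the introduction is exact in every sufficiently large degree. The Hilbert polynomials of its three computable terms are $\sum_j(m_j+2)$ for $I_{\bbW^{(1)}}/I_{\bbW^{(2)}}$, $3\sum_j(m_j+1)$ for $I_\bbW\Omega^1_{S/K}/I_{\bbW^{(1)}}\Omega^1_{S/K}$, and $3\sum_j\tbinom{m_j+1}{2}$ for $\Omega^2_{S/K}/I_\bbW\Omega^2_{S/K}$; the alternating-sum relation then yields $\HP_{\Omega^2_{R_\bbW/K}}(z)=\sum_j \tfrac{(3m_j+2)(m_j-1)}{2}$ after expanding and collecting. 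The main obstacle is thus the characteristic-zero local argument establishing injectivity for the $k=1$ sequence; the $k=2$ case is essentially bookkeeping once Proposition~\ref{PropS3_3} is in hand, and $k=3$ is a direct application of the main theorem of Section~\ref{sec3}.
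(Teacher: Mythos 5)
Your treatment of $k=3$ coincides with the paper's (a direct application of Theorem~\ref{ThmS2_7} with $n=2$), and your $k=1$ argument uses the same short exact sequence $0\to I_\bbW/I_{\bbW^{(1)}}\to R_\bbW(-1)^3\to\Omega^1_{R_\bbW/K}\to 0$; the paper simply cites \cite[Theorem~1.7]{KLL2015} for it, whereas you re-derive it, and your characteristic-zero local argument for the injectivity of $d$ (killing the lowest-order component via Euler's relation) is sound. The arithmetic in both of these cases checks out.

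The $k=2$ case, however, has a genuine gap: it is circular. Proposition~\ref{PropS3_3} is proved in the paper \emph{after} Proposition~\ref{PropS3_1}, and its proof explicitly invokes the formula $\HF_{\Omega^2_{R_\bbW/K}}(i)=\sum_{j=1}^s\tfrac{1}{2}(3m_j+2)(m_j-1)$ for $i\ge t$ --- exactly the statement you are trying to establish. This is not an accident of exposition: since the four-term sequence is already known from \cite[Proposition~5.4]{KLL2018} to be a complex with $\im(\beta)=\Ker(\gamma)$, $\gamma$ surjective and $\alpha$ injective, exactness at the remaining spot in degree $i$ is \emph{equivalent} to the vanishing of the alternating sum of Hilbert functions in that degree, which for $i\gg 0$ is equivalent to knowing $\HP_{\Omega^2_{R_\bbW/K}}$. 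So there is no independent route to Proposition~\ref{PropS3_3} that you could invoke first. The paper avoids this by instead using the exact sequence $0\to\Omega^3_{R_\bbW/K}\to\Omega^2_{R_\bbW/K}\to\Omega^1_{R_\bbW/K}\to\mathfrak{m}_{\bbW}\to 0$ of \cite[Proposition~2.4]{KLL2018}, which gives $\HP_{\Omega^2_{R_\bbW/K}}=\HP_{\Omega^3_{R_\bbW/K}}+\HP_{\Omega^1_{R_\bbW/K}}-\deg\bbW$ and hence the claimed value from the two cases you have already settled. Your bookkeeping for the four-term sequence is arithmetically consistent (as it must be), but as a proof it assumes the conclusion; replace it with the $\Omega^3\to\Omega^2\to\Omega^1\to\mathfrak{m}_{\bbW}$ sequence or some other independent input.
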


\begin{proof}
Recall that, the \textit{first fattening}
of $\bbW= m_1P_1+\cdots+m_sP_s$
is the fat point scheme
$\bbW^{(1)} := (m_1+1)P_1+\cdots+(m_s+1)P_s$.
By \cite[Theorem~1.7]{KLL2015}, there is a short exact sequence
of graded $R_\bbW$-modules
\begin{align*}
0\longrightarrow I_\bbW/I_{\bbW^{(1)}}\longrightarrow R_\bbW^3(-1)
\longrightarrow \Omega^1_{R_\bbW/K}\longrightarrow 0,
\end{align*}
and hence we get
\begin{align*}
\HP_{\Omega^1_{R_\bbW/K}}(z)
&= 4\cdot \tsum{j=1}{s} {\textstyle\binom{m_j+1}{2}} -
\tsum{j=1}{s} {\textstyle\binom{m_j+2}{2}} \\
&= \tsum{j=1}{s}  \tfrac{1}{2}\,(4m_j(m_j+1)-(m_j+1)(m_j+2))\\
&= \tsum{j=1}{s}  \tfrac{1}{2}\,(3m_j-2)(m_j+1).
\end{align*}
Also, Theorem~\ref{ThmS2_7} yields
$\HP_{\Omega^3_{R_\bbW/K}}(z)= \sum_{j=1}^s \frac{1}{2} \,m_j(m_j-1)$.
On the other hand, by \cite[Proposition~2.4]{KLL2018}, we have
an exact sequence of graded $R_\bbW$-modules
\begin{align*}
0\longrightarrow \Omega^3_{R_\bbW/K}\longrightarrow
\Omega^2_{R_\bbW/K}\longrightarrow \Omega^1_{R_\bbW/K}
\longrightarrow \mathfrak{m}_{{}_\bbW} \longrightarrow 0
\end{align*}
where $\mathfrak{m}_{{}_\bbW}$ is the homogeneous maximal ideal
of~$R_\bbW$. Thus it follows that
\begin{align*}
\HP_{\Omega^2_{R_\bbW/K}}(z) &=
\HP_{\Omega^3_{R_\bbW/K}}(z)+ \HP_{\Omega^1_{R_\bbW/K}}(z)- \HP_{\mathfrak{m}_\bbW}(z)\\
& = \tsum{j=1}{s} \;\tfrac{1}{2} \, m_j(m_j-1)+ \tsum{j=1}{s} \;\tfrac{1}{2} (3m_j-2)(m_j+1) - 
\tsum{j=1}{s}  \;\tfrac{1}{2}\, m_j(m_j+1)\\
& = \tsum{j=1}{s} \;\tfrac{1}{2}\, (3m_j+2)(m_j-1). \tag*{$\square$}
\end{align*}
\end{proof}

The next remark recalls some information about the degree from where on
we now know the Hilbert function of $\Omega^k_{R_\bbW/K}$.

\begin{remark} \label{RemS3_2}
Set $t := \max\{r_\bbW+1, r_{\bbW^{(1)}}\}$.
The regularity indices of the modules of K\"ahler differentials
of~$\bbW\subseteq \bbP^2$ are bounded by
$$
\ri(\Omega^1_{R_\bbW/K})\le t    \quad\mbox{and}\quad
\ri(\Omega^2_{R_\bbW/K})\le t+1  \quad\mbox{and}\quad
\ri(\Omega^3_{R_\bbW/K})\le t+1.
$$
Moreover, if the support of~$\bbW$ lies on a non-singular conic, 
we have $\ri(\Omega^1_{R_\bbW/K})= t=r_{\bbW^{(1)}}$
(see \cite[Theorem~6.2]{KLL2018}), and the bounds for 
$\ri(\Omega^2_{R_\bbW/K})$ and $\ri(\Omega^3_{R_\bbW/K})$ 
are sharp.
For instance, the scheme $\bbW = P_1+P_2+P_3$ consisting of 
three non-collinear points in $\bbP^2$ satisfies 
$\ri(\Omega^1_{R_\bbW/K})= t=3$ and 
$\ri(\Omega^2_{R_\bbW/K})=\ri(\Omega^3_{R_\bbW/K})= t+1=4$.
\end{remark}

Next, let us look more closely at the module of K\"ahler differential 2-forms
of fat point schemes $\bbW$ in~$\bbP^2$. In general, for a
fat point scheme $\bbW = m_1P_1+\cdots+m_sP_s$ in~$\bbP^n$
with the $i$-th fattening $\bbW^{(i)} = (m_1+i)P_1+\cdots+(m_s+i)P_s$ 
for $i\ge 1$,
\cite[Proposition~5.4]{KLL2018}
implies that the sequence of graded $R_\bbW$-modules
\begin{align}\label{Formula7}
0\rightarrow I_{\bbW^{(1)}}/I_{\bbW^{(2)}}
\stackrel{\alpha}{\rightarrow}
I_\bbW\Omega^1_{S/K}/I_{\bbW^{(1)}}\Omega^1_{S/K}
\stackrel{\beta}{\rightarrow}
\Omega^2_{S/K}/ I_\bbW\Omega^2_{S/K}
\stackrel{\gamma}{\longrightarrow}
\Omega^2_{R_\bbW/K}\rightarrow 0
\end{align}
is a complex. Here the map~$\alpha$ is given by
$\alpha(F+I_{\bbW^{(2)}})=dF+I_{\bbW^{(1)}}\Omega^1_{S/K}$,
the map~$\beta$ is given by
$\beta(GdX_i+ I_{\bbW^{(1)}}\Omega^1_{S/K})= d(GdX_i)+I_\bbW\Omega^2_{S/K}$,
and the map~$\gamma$ is given by $\gamma(H+I_\bbW\Omega^2_{S/K})
=H+(I_\bbW\Omega^2_{S/K}+dI_\bbW\Omega^1_{S/K})$.
In addition, we have $\im(\beta)=\Ker(\gamma)$.
For a fat point scheme~$\bbW$ in~$\bbP^2$,
the complex (\ref{Formula7}) satisfies the following exactness property
which generalizes the case of an equimultiple fat point scheme 
studied in \cite[Proposition~5.5]{KLL2018}.

\begin{proposition} \label{PropS3_3}
Let $\mathbb{W}=m_1P_1+\cdots+m_sP_s$ be a fat point scheme
in~$\mathbb{P}^2$, let
$t :=  \max\{r_{\bbW^{(2)}}, r_{\bbW^{(1)}}+1,r_\bbW+2\}$,
and let $\alpha, \beta$ and $\gamma$ be the maps defined above.
Then, for all $i\ge t$, we have the exact sequence of $K$-vector spaces
$$
\begin{aligned}
0\rightarrow (I_{\bbW^{(1)}}/I_{\bbW^{(2)}})_i
&\stackrel{\alpha}{\longrightarrow}
(I_\bbW\Omega^1_{S/K}/I_{\bbW^{(1)}}\Omega^1_{S/K})_i \\
&\stackrel{\beta}{\longrightarrow}
(\Omega^2_{S/K}/ I_\bbW\Omega^2_{S/K})_i
\stackrel{\gamma}{\longrightarrow}
(\Omega^2_{R_\bbW/K})_i\rightarrow 0.
\end{aligned}
$$
\end{proposition}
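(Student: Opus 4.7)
By \cite[Proposition~5.4]{KLL2018} the sequence~(\ref{Formula7}) is already a complex with $\im(\beta)=\Ker(\gamma)$, and the map $\gamma$ is visibly surjective, being the canonical projection onto the quotient describing $\Omega^2_{R_\bbW/K}$ in Proposition~\ref{PropS1_10}. To establish exactness in degree $i\ge t$ it therefore suffices to prove that (i)~$\alpha$ is injective and (ii)~$\Ker(\beta)=\im(\alpha)$ in that degree.

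For (i), I would first isolate the following pointwise characteristic-zero identity: for any point $P\in\bbP^n$ and any $m\ge 0$, a homogeneous polynomial $F\in S$ lies in $I_P^{m+1}$ if and only if $F\in I_P$ and $\partial F/\partial X_k\in I_P^m$ for every $k\in\{0,\dots,n\}$. The nontrivial direction is verified by expanding~$F$ in local affine coordinates $y_1,\dots,y_n$ centered at~$P$: the coefficient of every monomial $y^\alpha$ with $0<|\alpha|\le m$ can be read off from some partial derivative, using that the nonzero integer $\alpha_k$ is invertible in~$K$. Intersecting this equivalence over $P_1,\dots,P_s$ yields that $F\in I_{\bbW^{(2)}}$ if and only if $F\in I_\bbX$ and $\partial F/\partial X_k\in I_{\bbW^{(1)}}$ for every~$k$. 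Since $I_{\bbW^{(1)}}\subseteq I_\bbX$, this shows that whenever $F\in I_{\bbW^{(1)}}$ satisfies $dF\in I_{\bbW^{(1)}}\Omega^1_{S/K}$ we in fact have $F\in I_{\bbW^{(2)}}$, so $\alpha$ is injective in every degree.

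For (ii), I would compare Hilbert polynomials. Using $\Omega^1_{S/K}\cong S(-1)^3$ and $\Omega^2_{S/K}\cong S(-2)^3$, one computes
$$
\begin{aligned}
\HP_{I_{\bbW^{(1)}}/I_{\bbW^{(2)}}}(z) &= \tsum{j=1}{s}(m_j+2), \\
\HP_{I_\bbW\Omega^1_{S/K}/I_{\bbW^{(1)}}\Omega^1_{S/K}}(z) &= 3\tsum{j=1}{s}(m_j+1),\\
\HP_{\Omega^2_{S/K}/I_\bbW\Omega^2_{S/K}}(z) &= 3\tsum{j=1}{s}\tbinom{m_j+1}{2},
\end{aligned}
$$
while Proposition~\ref{PropS3_1} gives $\HP_{\Omega^2_{R_\bbW/K}}(z)=\tsum{j=1}{s}\tfrac{1}{2}(3m_j+2)(m_j-1)$; a direct summand-by-summand calculation confirms that the alternating sum of these four polynomials vanishes. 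The definition of~$t$ ensures that for $i\ge t$ each of the first three Hilbert functions already equals its Hilbert polynomial (the respective stabilization degrees $r_{\bbW^{(2)}}$, $r_{\bbW^{(1)}}+1$, and $r_\bbW+2$ all being~$\le t$), and Proposition~\ref{PropS2_1} applied with $k=n=2$ furnishes $\ri(\Omega^2_{R_\bbW/K})\le\max\{r_\bbW+2,\,r_{\bbW^{(1)}}+1\}\le t$ for the fourth. Writing $A,B,C,D$ for the four graded modules appearing in the sequence, the injectivity of~$\alpha$ together with $\im(\beta)=\Ker(\gamma)$ and the surjectivity of~$\gamma$ force, in every degree~$i$,
$$
\dim_K A_i - \dim_K B_i + \dim_K C_i - \dim_K D_i \;=\; \dim_K\im(\alpha)_i - \dim_K\Ker(\beta)_i \;\le\; 0,
$$
with equality if and only if $\im(\alpha)_i=\Ker(\beta)_i$; for $i\ge t$ the left-hand side equals the vanishing alternating sum of Hilbert polynomials, so the complex is exact at the middle position. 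The main obstacle is the characteristic-zero lemma underlying~(i); once it is in hand, the only delicate bookkeeping point is that Proposition~\ref{PropS2_1} must be invoked to control $\ri(\Omega^2_{R_\bbW/K})$ \emph{before} the exactness of the sequence is known.
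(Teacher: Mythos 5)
Your proof is correct and follows essentially the same route as the paper's: both reduce exactness at the middle term to the vanishing of the alternating sum of Hilbert functions in degrees $i\ge t$, verified by the same numerical computation and the same regularity-index bounds (your appeal to Proposition~\ref{PropS2_1} with $k=n=2$ gives exactly the bound the paper extracts via Remark~\ref{RemS3_2}). The only difference is that you supply a direct characteristic-zero proof of the injectivity of $\alpha$, which the paper simply imports from \cite[Proposition~5.4]{KLL2018}.
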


\begin{proof}
It suffices to show that $\im(\alpha)=\Ker(\beta)$. Equivalently,
it suffices to prove the equality of Hilbert functions
\begin{align}\label{Formula8}
\HF_{\Omega^2_{R_\bbW/K}}(i) +
\HF_{I_\bbW\Omega^1_{S/K}/I_{\bbW^{(1)}}\Omega^1_{S/K}}(i)
=\HF_{I_{\bbW^{(1)}}/I_{\bbW^{(2)}}}(i)
+ \HF_{\Omega^2_{S/K}/ I_\bbW\Omega^2_{S/K}}(i)
\end{align}
for all $i\ge t$. Since $\Omega^1_{S/K}$ is a free $S$-module
with basis $\{dX_0,dX_1,dX_2\}$ and $\Omega^2_{S/K}$  is
a free $S$-module with basis $\{dX_0dX_1, dX_0dX_2,dX_1dX_2\}$,
for $i\in\bbZ$ we have
$$
\HF_{I_\bbW\Omega^1_{S/K}/I_{\bbW^{(1)}}\Omega^1_{S/K}}(i)
= 3\HF_{\bbW^{(1)}}(i-1)-3\HF_\bbW(i-1)
$$
and
$$
\HF_{\Omega^2_{S/K}/ I_\bbW\Omega^2_{S/K}}(i) = 3\HF_\bbW(i-2).
$$
So, the equality (\ref{Formula8}) can be written as
\begin{align}\label{Formula9}
&\HF_{\Omega^2_{R_\bbW/K}}(i) +
3\HF_{\bbW^{(1)}}(i-1)-3\HF_\bbW(i-1)\\
&= \HF_{\bbW^{(2)}}(i) - \HF_{\bbW^{(1)}}(i)
+ 3\HF_\bbW(i-2).\notag
\end{align}
By Proposition~\ref{PropS3_1} and Remark~\ref{RemS3_2},
$\HF_{\Omega^2_{R_\bbW/K}}(i)= \sum_{j=1}^s \frac{1}{2}\, (3m_j+2)(m_j-1)$
for all $i\ge t$.
Hence the equality (\ref{Formula9}) follows from the fact that
\begin{align*}
&\HF_{\Omega^2_{R_\bbW/K}}(i) +
3\HF_{\bbW^{(1)}}(i-1)-3\HF_\bbW(i-1)
- \HF_{\bbW^{(2)}}(i) \\
&\quad + \HF_{\bbW^{(1)}}(i)
- 3\HF_\bbW(i-2)\\
&= \tsum{j=1}{s} \left( \tfrac{1}{2}\,(3m_j+2)(m_j-1)
+ 4\,\tbinom{m_j+2}{2} - 6\,\tbinom{m_j+1}{2} - \tbinom{m_j+3}{2}
\right)\\
&= \tsum{j=1}{s} \tfrac{1}{2}(3m_j{+} 2)(m_j{-}1 )+ 4(m_j{+}1)(m_j{+}2)-
6m_j(m_j{+}1)-(m_j{+}2)(m_j{+}3)\\
&=0
\end{align*}
for all $i\ge t$. Therefore the claim follows.
\qed\end{proof}

Notice that the exactness of the sequence in this proposition 
allows us to compute values of the Hilbert function of $\Omega^2_{R_\bbW/K}$ 
in the corresponding degrees. Unfortunately,
our final example indicates that this exactness property
does not hold for all $i\in \bbZ$, even when the support
of~$\bbW$ is a complete intersection in~$\bbP^2$.

\begin{example}
Let $\mathbb{W}$ be the fat point scheme given in Example~\ref{ExamS1_1}.
A calculation using~ApCoCoA yields
\begin{align*}
&\HF_{I_{\bbW^{(1)}}/ I_{\bbW^{(2)}}}: &&
0 \ \;0 \ \;0 \ \;0 \ \;\,0 \quad \;\,0 \ \;\,0 \ \;\,2 \ \;\,9 \ \,15 \quad 19 \ 23 \ 26
\ 29 \ 30 \quad 31 \ 31 \cdots,\\
&\HF_{I_\bbW\Omega^1_{S/K}/ I_{\bbW^{(1)}}\Omega^1_{S/K}}:
\!\!&&
0 \ \;0 \ \;0 \ \;0 \ \;\,0 \quad \;\,0 \ \;\,0 \ \;\,2 \ \;\,9 \ \,15 \quad 18 \ 21 \ 22 \ 23 \
23 \quad 23 \ 23 \cdots,\\
&\HF_{\Omega^2_{S/K}/ I_\bbW\Omega^2_{S/K}}:
&&  0 \ \;0 \ \;1 \ \;3 \ \;\,6 \quad 10 \ 15 \ 21 \ 26 \ 27 \quad 28 \ 28 
\ 28 \ 28 \ 28 \quad 28 \ 28 \cdots,\\
&\HF_{\Omega^2_{R_\bbW/K}}:&&
0 \ \;0 \ \;3 \ \;9 \ 18 \quad 30 \ 45 \ 57 \ 53 \ 51 \quad 48 \
47 \ 46 \ 46 \ 46 \quad  46 \ 46 \cdots.
\end{align*}
Thus the proposition holds true for $i=0$, for $i=1$, and for $i\ge 15$.
However, the sequence is not exact for $2\le i\le 14$.
\end{example}

\medskip
\begin{acknowledgements}
The authors were supported by the Vietnam National Foundation 
for Science and Technology Development
(NAFOSTED) under grant number 101.04-2019.07.
The second author thanks the University of Passau for its
hospitality and support during part of the preparation of this paper.
The authors thank the referee for his/her careful reading of the paper.
\end{acknowledgements}

\medskip

\end{document}